\documentclass[12pt,leqno]{article}
%\linespread{1.6}
\pagestyle{plain}
\usepackage{amsmath, amsthm, amsfonts, amssymb, color}
\usepackage{mathrsfs}
\setlength{\topmargin}{0cm} \setlength{\oddsidemargin}{0cm}
\setlength{\evensidemargin}{0cm} \setlength{\textwidth}{15.5truecm}
\setlength{\textheight}{21truecm}

\newtheorem{thm}{Theorem}[section]
\newtheorem{cor}[thm]{Corollary}
\newtheorem{lem}[thm]{Lemma}
\newtheorem{prp}[thm]{Proposition}

\newtheorem{exa}[thm]{Example}
\newtheorem{Remark}[thm]{Remark}
\theoremstyle{definition}

\newcommand{\scr}[1]{\mathscr #1}
\definecolor{wco}{rgb}{0.5,0.2,0.3}

\numberwithin{equation}{section}

\newcommand{\ua}{\uparrow}

\title{{\bf  Functional inequality on path space
over a non-compact Riemannian manifold}
%\footnote{Supported by the Portuguese Science Foundation
%(FCT) for the project ``Probabilistic approach to finite and infinite dimensional
%dynamical systems'' (No.\ PTDC/MAT/104173/2008).  }
}

\author{{\bf Xin Chen $^{a)}$ and Bo Wu $^{b)}$}\\
\footnotesize {$^{a)}$ Grupo de Fisica Matematica, Universidade de
Lisboa, Av Prof Gama Pinto 2, Lisbon 1649-003, Portugal}\\
\footnotesize {$^{b)}$ School  of Mathematical Sciences, Fudan
University, Shanghai 200433, China}\\
{\footnotesize E-mail: chenxin\_217@hotmail.com, wubo@fudan.edu.cn}}

%\\ {\footnotesize wangfy@bnu.edu.cn; \
%http://math.bnu.edu.cn/$^{\backsim}$ wangfy}

\date{}

\begin{document}

\maketitle

\def\R{\mathbb R} \def\EE{\mathbb E} \def\Z{\mathbb Z} \def\ff{\frac} \def\ss{\sqrt}
\def\H{\mathbb H}
\def\dd{\delta} \def\DD{\Delta} \def\vv{\varepsilon} \def\rr{\rho}
\def\<{\langle} \def\>{\rangle} \def\GG{\Gamma} \def\gg{\gamma}
\def\ll{\lambda} \def\LL{\Lambda} \def\nn{\nabla} \def\pp{\partial}
\def\d{\text{\rm{d}}} \def\loc{\text{\rm{loc}}} \def\bb{\beta} \def\aa{\alpha} \def\D{\scr D}
\def\E{\scr E} \def\si{\sigma} \def\ess{\text{\rm{ess}}}
\def\beg{\begin} \def\beq{\beg}  \def\F{\scr F}
\def\Ric{\text{\rm{Ric}}}
\def\Var{\text{\rm{Var}}}
\def\Ent{\text{\rm{Ent}}}
\def\Hess{\text{\rm{Hess}}}\def\B{\scr B}
\def\e{\text{\rm{e}}} \def\ua{\underline a} \def\OO{\Omega} \def\b{\mathbf b}
\def\oo{\omega}     \def\tt{\tilde} \def\Ric{\text{\rm{Ric}}}
\def\cut{\text{\rm{cut}}} \def\P{\mathbb P} \def\ifn{I_n(f^{\bigotimes n})}
\def\fff{f(x_1)\dots f(x_n)} \def\ifm{I_m(g^{\bigotimes m})} \def\ee{\varepsilon}
\def\C{\scr C}
\def\M{\scr M}\def\ll{\lambda}
\def\X{\scr X}
\def\T{\scr T}
\def\A{\mathbf A}
\def\LL{\scr L}
\def\gap{\mathbf{gap}}
\def\div{\text{\rm div}}
\def\dist{\text{\rm dist}}
\def\cut{\text{\rm cut}}
\def\supp{\text{\rm supp}}
\def\Cov{\text{\rm Cov}}
\def\Dom{\text{\rm Dom}}
\def\Cap{\text{\rm Cap}}
\def\sect{\text{\rm sect}}\def\H{\mathbb H}

\begin{abstract}We prove the existence of the O-U Dirichlet form and the damped
O-U Dirichlet form on path space over a general
non-compact Riemannian manifold which is complete and stochastically complete.
We show  a
weighted log-Sobolev inequality for the O-U Dirichlet form and the (standard)
log-Sobolev inequality for the damped O-U Dirichlet form. In particular, the
Poincar\'e inequality (and the super Poincar\'e inequality) can be
established for  the  O-U  Dirichlet form on path space over a class of Riemannian manifolds with
unbounded Ricci curvatures. Moreover, we construct a
large class of quasi-regular local Dirichlet forms with unbounded random diffusion coefficients on
path space over a general non-compact manifold.
\end{abstract}

\noindent Keywords: Dirichlet form; Closability; Functional inequality; Quasi-regularity; Path
space\vskip 2cm

\section{Introduction}

Suppose $M$ is a $n$-dimensional non-compact complete connected Riemannian
manifold, the Riemannian path space $C_{o,T}(M)$ over $M$ is defined by
$$C_{o,T}(M):=\{\gamma\in C([0,T];M):\gamma(0)=o\},$$
where $T$ is a positive constant and $o \in M$.
Let $d_M$ be the Riemannian distance on $M$, then $C_{o,T}(M)$ is a Polish
space under the uniform distance
$$d(\gamma,\sigma):=\displaystyle\sup_{t\in[0,T]}d_M(\gamma(t),\sigma(t)),\quad\gamma,\sigma\in C_{o,T}(M).$$

Let $O(M)$ be the orthonormal frame bundle over $M$, and let $\pi: O(M) \rightarrow M$ be the canonical projection.
Furthermore, we choose a standard othornormal basis $\{H_i\}_{i=1}^n$
of horizontal vector fields on $O(M)$ and consider the following SDE,
\begin{equation}\label{c1.0}
\begin{cases}
&\d U_t=\displaystyle\sum^n_{i=1}H_i(U_t)\circ\d W_t^i,\ \ t \in [0,\zeta),\\
& U_0=u_o,
\end{cases}
\end{equation}
where $u_o$ is a fixed orthonormal basis of $T_o M$, $W^1_t,\cdots,W_t^n$ are independent
Brownian motions on $\mathbb{R}$ and $\zeta$ is the maximal time of the solution.
Then  $X_t:=\pi(U_t),\ t \in [0,\zeta)$ is the Brownian motion on
$M$ with initial point $o$, and $U_{\cdot}$ is the (stochastic) horizontal lift along
$X_{\cdot}$. Throughout this paper, besides the completeness of $M$,
we assume further that $M$ is stochastically complete, i.e., $\zeta=\infty$, a.s..

Let $\mu_{o,T}$ be the distribution of $X_{\cdot}$ in the time interval $t\in [0,T]$, then $\mu_{o,T}$
is a probability measure on $C_{o,T}(M)$. From now on, we fix $o \in M$, $T=1$,
and for simplicity, we write $C_o(M)$ for $C_{o,1}(M)$ and
$\mu$ for $\mu_{o,1}$.
Let $\F C_b$ be the space of bounded Lipschitz continuous cylinder functions on $C_{o}(M)$, i,e, for
every $F \in \F C_b$, there exist some $m\geq1,
0<t_1<t_2\cdots<t_m\leq 1, f\in C^{Lip}_b(M^m)$ such that
$F(\gamma)=f\big(\gamma(t_1),\cdots,\gamma(t_m)\big)$, $\gamma \in C_{o}(M)$, where
$C^{Lip}_b(M^m)$ is the collection of bounded Lipschitz continuous functions on $M^m$.
%\begin{equation*}
%\begin{split}
%&\F C^\infty_b:=\Big\{F(\gamma)=f(\gamma_{t_1},\cdots,\gamma_{t_n}),\ \ \forall \gamma \in C_{x,T}(M),\\
%& \ \text{for}\ \text{some}\ n\geq1,
%0<t_1<t_2\cdots<t_n\leq T, f\in C^{Lip}_b(M^n)\Big\},
%\end{split}
%\end{equation*}

Suppose $\mathbb{H}$ is the standard Cameron-Martin space for $C([0,1];\mathbb{R}^n)$, i.e.
\begin{equation*}
\begin{split}
&\mathbb{H}:=\Big\{h\in C([0,1]; \mathbb{R}^n)\Big|
h~ \text{is absolutely continuous}\\
&~~~~~~~~~~~~~~~~~~~~~~~~~~~~~~~~h(0)=0, \|h\|^2_{\mathbb{H}}:=\int_0^1| h'(s)|^2\d s<\infty\Big\},
\end{split}
\end{equation*}
where $h'(s)$ is the derivative with respect to the time variable $s$. In
fact, $\mathbb{H}$ is
a separable Hilbert space with the inner product $\<h,g\>_{\mathbb{H}}:=\int_0^1  \<h'(s) , g'(s)\>\d s,\
h,g\in \mathbb H.$
%We also assume
%To construct local Dirichlet forms on $L^2(\mu):=L^2(W_x;\mu),$ we
% first recall the classical Ornstein-Uhlenbeck Dirichlet form determined by $\mu$ and the Malliavin
% gradient operator. Let $u_t$ be the horizontal lift of the $L$-diffusion
%process; that is,
For any $F\in\F C_b$ with the form
$F(\gamma):=f\big(\gamma(t_1),\cdots,\gamma(t_m)\big)$ and any
$h\in\mathbb{H}$, we define the directional derivative $D_h F$ as following,
\begin{equation}\label{c1.1}
D_hF(\gamma):=\displaystyle\sum^m_{i=1}\<\nabla_i
f\big(\gamma(t_1),\cdots,\gamma(t_m)\big),U_{t_i}(\gamma)h(t_i)\>_{T_{\gamma(t_i)}M},
\end{equation}
where $\nabla_i$ is the (distributional) gradient operator for the $i$-th
component on $M^m$ and $U_{\cdot}(\gamma)$ is the horizontal lift along $\gamma(\cdot)$. Note that $D_hF$ is independent of the representation of
$F$, $\nabla_i f$ is
defined almost everywhere with respect to the
Riemannian volume measure, and the law of $\gamma(t), t \in (0,1]$ under $\mu$ is absolutely continuous
with respect to the Riemannian volume measure (see e.g. \cite{Hsu2}), so $D_h F$ is well defined, and it is defined
$\mu$-$a.s.$ on $C_{o}(M)$.
%We note that $D_h F$ is independent of the representation
%of $F$.
By Riesz representation theorem, there exists a gradient operator $DF(\gamma)\in \mathbb{H}$,
such that $\<DF(\gamma),h\>_{\mathbb{H}}=D_hF(\gamma),
 h\in\mathbb{H}$, $\gamma \in C_{o}(M)$. And for every $F \in \F C_b$ with the form above,
 it is easy to check that $DF$ has the following expression,
 $$(DF(\gamma))_s=\sum^m_{i=1}(s\wedge t_i)U_{t_i}(\gamma)^{-1}\nabla_i
f(\gamma),\quad \mu-a.s..$$
 Since $DF$ is bounded for every
$F \in \F C_b$, we can define a quadratic form as following,
\begin{equation}\label{c1.2}
\E(F,G):=\int_{C_{o}(M)}\<DF,DG\>_{\mathbb{H}}\d \mu, \ \ F,G\in\F C_b.
\end{equation}

It is well known that if the based manifold $M$ is compact or with bounded Ricci curvature, then the integration
by parts formula holds for $D_h$, hence the quadratic form $(\E, \F C_b)$ is closable. According to the theory of Dirichlet form, it is not difficulty to show that
the closed extension $(\E, \mathscr D(\E))$ is a conservative local Dirichlet form on $L^2(\mu):=L^2(C_o(M),\mu)$, which is usually called the O-U Dirichlet form. For the case $M$ compact, see \cite{CM}, \cite{D1}, \cite{EL}, \cite{ES}
\cite{FM}, \cite{Hsu1}, for the case $M$ non-compact with bounded Ricci curvature, see \cite{CHL}, \cite{Hsu3}.
In fact, in the integration by parts formula for $D_h$, a term depending on the Ricci curvature of the
based manifold appears,
to make such term integrable, it is natural to put some restrictions on the bound of the Ricci curvature. On the other hand,
since the horizontal lift $U_t$ is an isometry, without any condition on the bound of the
Ricci curvature, the quadratic form $(\E, \F C_b)$ is still well defined by (\ref{c1.2}). In
this article, we will show the following result about the closability of $(\E, \F C_b)$,
\begin{thm}\label{t1.1} The quadratic form $(\E, \F C_b)$ is closable on
 $L^2(\mu)$, and its closed
 extension $(\E, \D(\E))$ is a Dirichlet form on  $ L^2(\mu)$.
\end{thm}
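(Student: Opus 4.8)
The plan is to verify closability directly from its definition: if $F_n\in\F C_b$ with $F_n\to 0$ in $L^2(\mu)$ and $(DF_n)$ is Cauchy in $L^2(\mu;\mathbb H)$ with limit $\Phi$, I must show $\Phi=0$. The naive route --- integrating by parts in the direction $h$ to produce a divergence $\beta(h)\in L^2(\mu)$ --- is exactly the one that forces an integrability condition on $\Ric$, so I would avoid it. Instead I would test $\Phi$ against a family of \emph{finite-dimensional} vector fields that is total in $L^2(\mu;\mathbb H)$, and carry out the integration by parts not on path space but on a finite product of the frame bundle, where the relevant densities are smooth and strictly positive under no curvature hypothesis whatsoever. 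Thus it suffices to prove $\langle\Phi,\Psi\rangle_{L^2(\mu;\mathbb H)}=0$ for each $\Psi$ in this total family.

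For a fixed partition $0<t_1<\cdots<t_m\le 1$ I would use the map $\gamma\mapsto(U_{t_1}(\gamma),\dots,U_{t_m}(\gamma))\in O(M)^m$ together with the elementary identity $U_{t_i}^{-1}\nabla_i f=(H_1\tilde f,\dots,H_n\tilde f)(U_{t_i})$, where $\tilde f=f\circ\pi^{\otimes m}$ and $H_1,\dots,H_n$ are the horizontal fields. Hence both $DF$ and the coefficients $U_{t_i}U_{t_j}^{-1}$ are genuine smooth functions of $(U_{t_1},\dots,U_{t_m})$, and the pushforward of $\mu$ under this map is the law $\nu_m$ of the horizontal diffusion sampled at $t_1,\dots,t_m$. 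Because the horizontal Laplacian $\frac12\sum_i H_i^2$ is hypoelliptic, $M$ is connected, and $M$ is stochastically complete, $\nu_m$ has a strictly positive smooth density $\rho_m$ with respect to the volume measure on $O(M)^m$; in particular all logarithmic derivatives $H^{(i)}\log\rho_m$ are finite on compact sets. The test fields I would use are those $\Psi\in L^2(\mu;\mathbb H)$ whose derivative $\Psi'_s$ is, along some partition, piecewise constant in $s$, each value being a smooth compactly supported $\mathbb R^n$-valued function of $(U_{t_1},\dots,U_{t_m})$; since step functions are dense in $L^2([0,1];\mathbb R^n)$ and such finite-dimensional coefficients are dense fibrewise, this family is total.

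Given such a $\Psi$, integrating out the time variable reduces $\langle DF_n,\Psi\rangle_{L^2(\mu;\mathbb H)}$ to a finite sum of terms $\int_{O(M)^m}\langle H^{(i)}\tilde f_n,\,\eta_i\rangle\,\rho_m\,\d\mathrm{vol}$ with $\eta_i$ smooth and compactly supported. On $O(M)^m$ I would integrate by parts in the horizontal directions, moving $H^{(i)}$ off $\tilde f_n$ and picking up the smooth logarithmic derivative of $\rho_m$; the resulting divergence $\Theta_i$ is bounded because $\eta_i$ has compact support, so $\Theta_i\in L^2(\mu)$. Consequently $\langle DF_n,\Psi\rangle_{L^2(\mu;\mathbb H)}=-\sum_i\int_{C_o(M)}F_n\,\Theta_i\,\d\mu\to 0$ as $F_n\to 0$ in $L^2(\mu)$, whence $\langle\Phi,\Psi\rangle=0$ and $\Phi=0$. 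The whole point is that the curvature term which fails to be $L^2$-integrable in the path-space integration by parts appears here only as the finite-dimensional quantity $H^{(i)}\log\rho_m$, which is harmless once the test field is compactly supported. I expect the main obstacle to be precisely these two finite-dimensional inputs: establishing strict positivity and smoothness of $\rho_m$ on the non-compact $O(M)^m$, and verifying totality of the compactly supported test fields, which requires a cutoff and approximation argument exploiting the positivity of $\rho_m$ to reduce general fibrewise coefficients to compactly supported smooth ones.

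Finally, for the Dirichlet (Markovian) property of the closed extension $(\E,\D(\E))$, I would use the chain rule $D(\varphi\circ F)=\varphi'(F)\,DF$, valid for $F\in\F C_b$ and smooth normal contractions $\varphi$, which gives $\|D(\varphi\circ F)\|_{\mathbb H}\le\|DF\|_{\mathbb H}$ and hence $\E(\varphi\circ F,\varphi\circ F)\le\E(F,F)$. Approximating a general normal contraction by smooth ones and passing to the closure then shows that unit contractions operate on $(\E,\D(\E))$, so it is a Dirichlet form.
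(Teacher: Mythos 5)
Your reduction to testing the limit $\Phi$ against a total family of cylinder vector fields is a reasonable general strategy for closability, but the step on which everything rests fails: the law $\nu_m$ of $(U_{t_1},\dots,U_{t_m})$ does \emph{not} in general admit a (strictly positive, smooth) density with respect to the volume measure on $O(M)^m$. The generator of $U_t$ is the horizontal Laplacian $\frac12\sum_i H_i^2$, which is driven by only $n$ vector fields on the $\bigl(n+\tfrac{n(n-1)}{2}\bigr)$-dimensional manifold $O(M)$; H\"ormander's bracket condition holds only when the curvature form (and its covariant derivatives) generates the vertical directions. In the extreme case $M=\R^n$ the horizontal distribution is involutive, $U_t$ stays on the $n$-dimensional leaf through $u_o$, and $\nu_m$ is singular with respect to volume on $O(M)^m$; nothing in the standing hypotheses (completeness plus stochastic completeness) rules out such degeneracy. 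Consequently the integration by parts on $O(M)^m$ that is supposed to produce the bounded divergence $\Theta_i$ cannot be performed as written. The natural repair --- pairing $DF_n$ only with fields of the form $\Psi(t_i)=U_{t_i}^{-1}v_i\bigl(\gamma(t_1),\dots,\gamma(t_m)\bigr)$ so that the pairing descends to $M^m$, where the finite-dimensional distributions do have smooth positive densities --- sacrifices totality: since $U_{t_i}$ depends on the whole path and not on $(\gamma(t_1),\dots,\gamma(t_m))$, the linear span of such coefficients is not dense in $L^2(\mu;\H)$ (one cannot even reach the constant direction $h$). Recovering totality forces genuinely path-dependent directions, and the adjoint of the derivative in such directions is exactly Driver's path-space integration by parts with its Ricci term, i.e.\ the route you set out to avoid.

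The paper circumvents this by a completely different, localization argument: for each $R$ it performs a conformal change of metric $\langle,\rangle_R=f_R^{-2}\langle,\rangle$ on a neighbourhood $M_R\supseteq B_R$, obtaining a complete manifold with \emph{bounded} Ricci curvature, for which the O-U form $(\E_R,\D(\E_R))$ is already known to be closable; it then couples the Brownian motions on $M$ and $M_R$ on one probability space, multiplies the Cauchy sequence $F_k$ by cut-offs $\phi_R=l_R(\rho)$ so that $\phi_R F_k$ lives in $\D(\E_R)$, deduces $\E_R(\phi_RF_k,\phi_RF_k)\to0$ from closability of $\E_R$, and finally bootstraps back to $\limsup_k\E(F_k,F_k)=0$ by letting $R\to\infty$ along the tail estimate $\int_{\{\rho>R-1\}}\|DF_m\|_\H^2\,\d\mu$. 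If you want to salvage a direct argument you would need either a substitute for the missing density (e.g.\ quasi-invariance of $\nu_m$ under the horizontal flows with an $L^2$ logarithmic derivative, which again must be proved without curvature bounds) or some comparable localization. Your final paragraph on the Markovian property of the closure is fine and matches the standard contraction argument the paper invokes.
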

In particular, only the completeness and the stochastic
completeness of the based manifold is needed in Theorem \ref{t1.1}. Under the same condition, i.e.
completeness and stochastic completeness of the based manifold, the existence of a
quasi-invariance flow on $C_o(M)$ was shown in \cite{HO}.

Provided the closability of $(\E, \F C_b)$, we can define its closed extension
$(\E, \mathscr{D}(\E))$ as the O-U Dirichlet form
on $L^2(\mu)$. A natural question is what functional inequality holds for the O-U Dirichlet form?
If the based manifold is compact or with
bounded Ricci curvature, the Poincar\'e inequality for the O-U Dirichlet form was first
shown in \cite{F}, after that the log-Sobolev inequality has also been established for the O-U Dirichlet form, see
e.g. \cite{AE}, \cite{CHL}, \cite{EL}, \cite{Hsu4}, \cite{Hsu3}. For a class of
based manifolds with unbounded Ricci curvatures, a weak Poincar\'e inequality
was shown in \cite{W} for the O-U Dirichlet form. In this article, we will study the functional inequality for
the O-U Dirichlet form on path space over a general non-compact manifold.

Let \begin{equation}\label{e15aa}\aligned&
K(\gamma):=\sup_{t \in [0,1]}\|\text{Ric}(\gamma(t))\|_{T_{\gamma(t)}M},\ \  \gamma \in C_o(M),\\&K_1(\gamma):=\inf_{t \in [0,1]}\inf_{v \in T_{\gamma(t)}M, |v|=1}\langle\text{Ric}(\gamma(t))v, v\rangle_{T_{\gamma(t)}M}
\ ,\ \ \  \gamma \in C_o(M).\endaligned
\end{equation}
For every $R \ge 0$, we define
%$\tilde K$, $\tilde K_1$ on $\R_{+}$ as following:
\begin{equation}\label{e15c}
\begin{split}
& \tilde K(R):=\sup\Big\{\|\text{Ric}(x)\|_{T_x M}:\
x \in M,\ d_M(o,x)\le R\Big\},\ \ \\
&\tilde K_1(R)
:=\inf\Big\{\langle\text{Ric}(x)v, v\rangle_{T_x M}:\ x \in M,\ d_M(o,x)\le R
,\ v \in T_{x}M,\ |v|=1\Big\}.
\end{split}
\end{equation}

\begin{thm}\label{t1.3}
(1) The following weighted log-Sobolev inequality holds,
\begin{equation}\label{t1.3-1}
\mu(F^2\log F^2)\le \int_{C_o(M)}\big(4+K(\gamma)^2\e^{-K_1(\gamma)}\big)
\|D F\|_{\H}^2 \d \mu ,\ \ \  F \in \F C_{b,loc}.
\ \mu(F^2)=1,
\end{equation}

(2) Suppose
\begin{equation*}
\tilde K(s) \le c_1(1+s^{\delta_1}),\ \ \tilde K_1(s)\ge -c_2-\delta_2 \log(1+s),\ \ \ s>0,
\end{equation*}
for some non-negative constants $c_1,c_2,\delta_1,\delta_2$
satisfying $2\delta_1+\delta_2\le 2$, then the following  Poincar\'e inequality
\begin{equation*}
\mu(F^2)\le c_3\E(F,F)+\mu(F)^2,\ \ \ F \in \D(\E),
\end{equation*}
holds for some $c_3>0$.
\end{thm}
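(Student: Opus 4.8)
The plan is to treat both parts through the Clark--Ocone--Bismut martingale representation, exploiting the interplay between the Malliavin gradient $D$ that defines $\E$ and the \emph{damped} gradient $\tilde D$ adapted to the heat flow. I would first fix the forward damped parallel transport $M_{s,t}\in\mathrm{End}(\R^n)$, the solution for $t\ge s$ of $\pp_t M_{s,t}=-\tfrac12\Ric_{U_t}M_{s,t}$ with $M_{s,s}=I$ and $\Ric_{U_t}:=U_t^{-1}\Ric_{\gamma(t)}U_t$, and the associated damped gradient with density $(\tilde D F)'_s=\sum_{i:\,t_i\ge s}M_{s,t_i}U_{t_i}^{-1}\nabla_i f$; recall $(DF)'_s=\sum_{i:\,t_i\ge s}U_{t_i}^{-1}\nabla_i f$. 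Integrating the ODE for $M_{s,t}$ and exchanging summation with integration gives the Volterra identity
\[
(\tilde D F)'_s=(DF)'_s-\tfrac12\int_s^1\Ric_{U_\tau}M_{s,\tau}(DF)'_\tau\,\d\tau ,
\]
which isolates the one place where curvature enters and, crucially, involves only $(DF)'$ rather than $\tilde D F$ on the right, so it is not circular.

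For part (1) I would run the martingale entropy method. With $N_t=\EE[F^2\mid\F_t]$, the Clark--Ocone formula $F^2=\EE[F^2]+\int_0^1\langle\EE[(\tilde D F^2)'_s\mid\F_s],\d W_s\rangle$ — whose integrand is the conditional damped gradient, by the Bismut formula for $\nabla P_t$ — together with It\^o's formula for $x\log x$ gives $\Ent_\mu(F^2)=\tfrac12\EE\int_0^1 N_s^{-1}|\EE[(\tilde D F^2)'_s\mid\F_s]|^2\,\d s$. Since $(\tilde D F^2)'_s=2F(\tilde D F)'_s$, the Cauchy--Schwarz bound $|\EE[2F(\tilde D F)'_s\mid\F_s]|^2\le 4N_s\,\EE[|(\tilde D F)'_s|^2\mid\F_s]$ collapses this to the sharp damped log-Sobolev inequality $\Ent_\mu(F^2)\le 2\,\EE\|\tilde D F\|_{\H}^2$, the standard log-Sobolev inequality for the damped O-U form. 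It remains to trade $\tilde D$ for $D$: from the Volterra identity, the bounds $\|\Ric_{U_\tau}\|\le K(\gamma)$ and $\|M_{s,\tau}\|\le\e^{-K_1(\gamma)(\tau-s)/2}\le\e^{-K_1(\gamma)/2}$ (the latter from $\pp_t|M_{s,t}v|^2\le-K_1(\gamma)|M_{s,t}v|^2$ and $\tau-s\le1$), with one Cauchy--Schwarz over $[s,1]$, give $\|\tilde D F\|_{\H}\le\big(1+\tfrac12 K(\gamma)\e^{-K_1(\gamma)/2}\big)\|DF\|_{\H}$ and hence $\|\tilde D F\|_{\H}^2\le\big(2+\tfrac12 K(\gamma)^2\e^{-K_1(\gamma)}\big)\|DF\|_{\H}^2$. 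Multiplying by the factor $2$ above produces exactly the weight $4+K^2\e^{-K_1}$ of (\ref{t1.3-1}); restricting to $\F C_{b,loc}$ is what keeps every integral finite in spite of the unboundedness of $\Ric$.

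For part (2) I would linearize (\ref{t1.3-1}) — replace $F$ by $1+\varepsilon F$ and let $\varepsilon\to0$ — to obtain the weighted Poincar\'e inequality $\Var_\mu(F)\le\tfrac12\int(4+K^2\e^{-K_1})\|DF\|_{\H}^2\,\d\mu$. Setting $\rho(\gamma):=\sup_{t\in[0,1]}d_M(o,\gamma(t))$, the hypotheses give $K(\gamma)\le c_1(1+\rho^{\delta_1})$ and $-K_1(\gamma)\le c_2+\delta_2\log(1+\rho)$, whence $K^2\e^{-K_1}\le C(1+\rho^{2\delta_1+\delta_2})\le C(1+\rho^2)$ precisely because $2\delta_1+\delta_2\le2$; the weight is thus dominated by $C(1+\rho^2)$. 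A radial comparison for the distance process $d_M(o,X_t)$, using the lower curvature bound, supplies the exponential integrability $\int\e^{\lambda\rho^2}\,\d\mu<\infty$ for small $\lambda>0$. The weighted inequality together with this integrability is then fed into Wang's criterion relating functional inequalities to integrability, yielding a super-Poincar\'e inequality $\mu(F^2)\le r\,\E(F,F)+\beta(r)\mu(|F|)^2$ with $\beta(r)<\infty$ for all $r>0$, whence the defect-free Poincar\'e inequality with a finite $c_3$ follows by the standard implication for probability measures.

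The hard part is this last conversion: a weighted Poincar\'e inequality with a genuinely unbounded weight does not upgrade to the ordinary one by any soft argument, so the quadratic control $C(1+\rho^2)$ must be made to interlock with the exponential integrability of $\rho$, and it is exactly the balance $2\delta_1+\delta_2\le2$ that keeps the weight quadratic and the resulting rate $\beta(r)$ finite. By contrast, the martingale computation and the Volterra comparison of the two gradients in part (1) are robust; the only genuinely delicate point there is justifying the Clark--Ocone formula and the underlying integration by parts on a non-compact base with unbounded $\Ric$ — which is exactly the role of the class $\F C_{b,loc}$.
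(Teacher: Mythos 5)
Your part (1) is essentially the paper's argument (damped log-Sobolev with constant $2$ via Clark--Ocone, then the operator bound $\|\tilde D F\|_{\H}^2\le\big(2+\tfrac12K^2\e^{-K_1}\big)\|DF\|_{\H}^2$ giving the weight $4+K^2\e^{-K_1}$), but it has a gap exactly at the point you flag and then dismiss: on a non-compact $M$ with no curvature bound, the Clark--Ocone formula and the underlying integration by parts are \emph{not} available, and restricting to $\F C_{b,loc}$ does not by itself supply them --- it only controls integrability. The paper's mechanism is a conformal change of metric: for each $R$ one builds a complete manifold $(M_R,f_R^{-2}\langle,\rangle)$ with \emph{bounded} Ricci curvature agreeing with $M$ on $B_R$, on which the Clark--Ocone formula and hence the damped log-Sobolev inequality (with the $R$-independent constant $2$) are known; a function $F=\tilde F l(\rho)\in\F C_{b,loc}$ supported in $\mathbf{B}_{R_0}$ is then transferred to $C_o(M_{R_0})$, where horizontal lifts and damped gradients coincide on the support. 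Without this localization step your martingale computation is unjustified in the stated generality.

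Part (2) is where the proposal genuinely diverges and fails. Linearizing \eqref{t1.3-1} yields only a \emph{weighted} Poincar\'e inequality with weight comparable to $1+\rho^{2\delta_1+\delta_2}$, and, as you yourself note, no soft argument upgrades this to the ordinary Poincar\'e inequality; the appeal to exponential integrability of $\rho^2$ and an unspecified ``Wang's criterion'' is not an argument, and the claim that one obtains a super-Poincar\'e inequality with $\beta(r)<\infty$ for all $r>0$ whenever $2\delta_1+\delta_2\le 2$ overshoots what is true (the paper obtains super-Poincar\'e only for $2\delta_1+\delta_2<2$; the boundary case yields only Poincar\'e). The paper's actual route keeps the entropy form: it inserts the cut-off $F_R=g_R(\rho)F$ with $g_R=(1-\theta/\theta(R))^+$ built from $\theta(r)=\mu(\rho>R_1)^{-1/2}\int_{R_1}^{r}\big(4+\e^{-\tilde K_1(s)}\tilde K(s)^2\big)^{-1/2}\d s$ into the weighted log-Sobolev inequality, which --- together with the Gaussian tail $\mu(\rho>R_1)\le C_1\e^{-C_2R_1^2}$ --- produces a \emph{weak} log-Sobolev inequality with rate $\alpha(r)\asymp|\log r|^{(2\delta_1+\delta_2)/2}$; the Cattiaux--Gentil--Guillin equivalences then convert a rate $\alpha(r)=O(|\log r|)$ (i.e.\ exactly $2\delta_1+\delta_2\le2$) into the Poincar\'e inequality. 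This interpolation via the function $\theta$, which integrates the reciprocal square root of the weight, is the missing key device in your proposal.
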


Theorem \ref{t1.3} is a combination of Theorem \ref{t4.1} and Corollary
\ref{cor} below. By our knowledge, it is the first result to show that the Poincar\'e inequality
holds for the O-U Dirichlet form on some path space whose based manifold may have unbounded Ricci curvature.
In particular, note that the right side of (\ref{t1.3-1}) may not be well defined for every
$F \in \F C_b$ without any condition on the curvature bound of the based manifold since the associated weighted
function may not be integrable, and
the weighted log-Sobolev inequality (\ref{t1.3-1}) holds for every $F \in \F C_{b,loc}$.
(see (\ref{e1}) below for the definition of $\F C_{b,loc}$)

As long as the based manifold is complete and stochastically
complete, we can construct the damped O-U Dirichlet form on $C_o(M)$,
see e.g. Example \ref{ex1} below. Moreover, the log-Sobolev inequality holds for the
damped O-U Dirichlet form with the corresponding constant to be $2$ (independent of
the curvature of the based manifold), see Theorem \ref{t4.1}.

In \cite{W}, a weak Poincar\'e inequality for the O-U Dirichlet form was shown under some conditions of $K_1$ and $K$,
but we are not sure
whether the weak Poincar\'e inequality is true if we only assume the based manifold is complete and stochastically complete.

Let
\begin{equation*}
\rho(\gamma):=\displaystyle\sup_{t\in[0,1]}d_M(\gamma(t),o),
\end{equation*}
we will show that for every $l \in C_0^{\infty}(\R)$, $l(\rho)\in
\D(\E)$, where $C_0^{\infty}(\R)$ denotes the set of smooth functions on $\R$ with compact supports.  Based on such property, we can construct
more general Dirichlet forms with diffusion coefficients,
which can be viewed as a generalization of those in
\cite{L} and \cite{WW1}.
In fact, let
\begin{equation}\label{e1}
\F C_{b,loc}:=\Big\{Fl(\rho): F \in \F C_{b},\ \ l \in C_0^{\infty}(\R)\Big\}
\end{equation}
be the collection of ``local" bounded Lipschitz continuous cylinder functions. Let
$\A: C_o(M)\times \H$ $\rightarrow \H$ be a measurable operator, such that,
\begin{enumerate}
 \item[(A1)] For $\mu$-$a.s.$ $\gamma \in C_o(M)$, $\A(\gamma):\H \rightarrow \H$ is a densely defined self-adjoint
operator with the domain $\D(\A(\gamma))$.
 \item[(A2)] For every $F \in \F C_{b,loc}$, $DF(\gamma)\in \D(\A(\gamma)^{\frac{1}{2}})$ for
 $\mu$-$a.s.$ $\gamma \in C_o(M)$ and
\begin{equation*}
\int_{C_o(M)}\big|\A(\gamma)^{\frac{1}{2}}\big(DF(\gamma)\big)\big|_{\H}^2 \d \mu <\infty.
\end{equation*}
\item[(A3)] For each $R>0$, there exists a constant $\vv(R)>0$ such that $\A(\gamma) \ge \vv(R)\mathbf{I}$
for $\mu$-$a.s.$ $\gamma\in C_o(M)$ satisfying $\rho(\gamma)\le R$,
where $\mathbf{I}$ denotes the identity operator.
\end{enumerate}
It is easy to see that under conditions (A1)-(A2), the following quadratic form
$(\E_{\A}, \F C_{b,loc})$ is well defined,
\begin{equation}\label{e2}
\E_{\A}(F,G):=\int_{C_o(M)}\big\langle \A(\gamma)^{\frac{1}{2}} DF(\gamma),
\A(\gamma)^{\frac{1}{2}} DG(\gamma)\big \rangle_{\H}d\mu,\ \ \  F,G \in \F C_{b,loc}.
\end{equation}
And in this article, it will be shown that if we assume (A1)-(A3),
then $(\E_{\A}, \F C_{b,loc})$ is closable, and its closed extension
$(\E_{\A}, \D(\E_{\A}))$ is a local Dirichlet form.

The quasi-regularity of a  Drichlet form, in particular
that on a infinite dimensional space (without locally compact property), implies the existence of the associated
Hunt process for the Dirichlet form. For the overall introduction of the properties of the quasi-regular
Dirichlet forms on infinite dimensional space, we refer the reader to \cite{MR}.
The quasi-regularity of the O-U Dirichlet form on path space over a compact manifold was first shown in
\cite{DR}.
And the quasi-regularity  of a class of Dirichlet forms with constant diffusion
coefficients was established in \cite{L}.
We also want to remark that if in condition (A2) above, we replace
the set $\F C_{b,loc}$ by $\F C_b$, and the constant $\vv(R)$ is independent of $R>0$ in (A3)
, then the quasi-regularity of such Dirichlet form $(\E_{\A}, \D(\E_{\A}))$ was shown in
 \cite{WW1}. See \cite{EM} for the case of the Dirichlet form on Finsler manifold,
 and see \cite{WW2} for the case of the Dirichlet form on free path space.
Another aim of this article is to prove the quasi-regularity of $(\E_{\A}, \D(\E_{\A}))$,
let the assumption (A2') be introduced as (\ref{e1a}) and (\ref{e1aa}) below, we can obtain the following result
\begin{thm}\label{t1.2}
Suppose (A1), (A2') and (A3) hold, then $(\E_{\A}, \D(\E_{\A}))$ is a quasi-regular
Dirichlet form.
\end{thm}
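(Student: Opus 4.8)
The plan is to verify the three conditions characterizing quasi-regularity in the sense of Ma--R\"ockner \cite{MR}: (i) there exists an $\E_{\A}$-nest consisting of compact subsets of $C_o(M)$; (ii) there is an $\E_{\A,1}^{1/2}$-dense subset of $\D(\E_{\A})$ whose elements possess $\E_{\A}$-quasi-continuous $\mu$-versions; and (iii) there are countably many $\E_{\A}$-quasi-continuous functions separating the points of $C_o(M)$ off an $\E_{\A}$-exceptional set. Conditions (ii) and (iii) I would treat first. Every $F\in\F C_{b,loc}$ is a bounded Lipschitz, hence continuous, function on the Polish space $C_o(M)$, so it is continuous on each member of any $\E_{\A}$-nest and therefore $\E_{\A}$-quasi-continuous; since $(\E_{\A},\D(\E_{\A}))$ is the closure of $(\E_{\A},\F C_{b,loc})$, this family is $\E_{\A,1}^{1/2}$-dense, which gives (ii). For (iii) I would fix countable dense sets $\{t_k\}\subset[0,1]$, $\{y_l\}\subset M$, a bounded strictly increasing $\varphi\in C^\infty(\R)$ and cutoffs $l_R\in C_0^\infty(\R)$ with $l_R\equiv1$ on $[0,R]$, and take the countable family $\gamma\mapsto\varphi\big(d_M(\gamma(t_k),y_l)\big)\,l_R(\rho(\gamma))\in\F C_{b,loc}$; these separate points, because a continuous path is determined by its values on $\{t_k\}$, the functions $d_M(\cdot,y_l)$ separate points of $M$, and for any two paths some $R$ has $l_R\equiv1$ at both.

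The heart of the matter, and the main obstacle, is (i). By Hopf--Rinow and Arzel\`a--Ascoli a set $K\subset C_o(M)$ is relatively compact once $\sup_{\gamma\in K}\rho(\gamma)<\infty$ and $K$ is equicontinuous, i.e. $\sup_{\gamma\in K}\omega_\delta(\gamma)\to0$ as $\delta\downarrow0$ with $\omega_\delta(\gamma):=\sup_{|s-t|\le\delta}d_M(\gamma(s),\gamma(t))$. I would first build a compact nest for the reference O-U form $\E$, seeking sets $\{\rho\le R\}\cap\bigcap_{j}\{\omega_{\delta_j}\le a_j\}$ with $a_j\downarrow0$. The $\rho$-direction is controlled by the already established fact that $l(\rho)\in\D(\E)$ for $l\in C_0^\infty(\R)$: bounded cutoffs $\psi_R(\rho)$, equal to $1$ on $\{\rho>R+1\}$ and vanishing on $\{\rho\le R\}$, lie in $\D(\E)$ with $\E_1(\psi_R(\rho),\psi_R(\rho))\to0$ (their $L^2$-mass is at most $\mu(\{\rho>R\})\to0$ and their energy concentrates on the annulus $\{R<\rho<R+1\}$), whence $\Cap_{\E}(\{\rho>R\})\to0$ by the Chebyshev-type bound $\Cap(\{u>\lambda\})\le\lambda^{-2}\E_1(u,u)$. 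The modulus-of-continuity direction is handled by approximating $\omega_\delta$ with finite dyadic maxima $\max d_M(\gamma(s_i),\gamma(s_j))$, $|s_i-s_j|\le\delta$: each block $\gamma\mapsto d_M(\gamma(s),\gamma(t))$ is a Lipschitz cylinder function with $\|D(d_M(\gamma(s),\gamma(t)))\|_{\H}^2\le C$ uniformly, since $|\nabla d_M|\le1$, $U_\cdot$ is an isometry and the weights $s\wedge t_i$ are bounded by $1$, and $|D\max_iF_i|\le\max_i|DF_i|$ pointwise; combined with the Gaussian-type smallness of $\mu(\{\omega_{\delta_j}>a_j\})$ this lets one choose $a_j\downarrow0$ so that $\sum_j\Cap_{\E}(\{\omega_{\delta_j}>a_j\})$ is small. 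A diagonal argument over $R\to\infty$ then yields a compact $\E$-nest.

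To pass from $\E$ to $\E_{\A}$ I would use the local two-sided comparison supplied by the hypotheses. On functions localized to $\{\rho\le R\}$ (via the cutoffs $l_R(\rho)$), assumption (A2') gives an upper bound $\E_{\A}\le\Lambda(R)\,\E$ and assumption (A3) the lower bound $\E_{\A}\ge\vv(R)\,\E$. The upper bound makes the $\E_{\A}$-capacities of the nest complements small (low-energy test functions for $\E$ remain low-energy for $\E_{\A}$ up to the factor $\Lambda(R)$), so the compact $\E$-nest, suitably localized, is also a compact $\E_{\A}$-nest and (i) holds; the lower bound guarantees that $\E_{\A}$-exceptional sets are $\E$-exceptional, so the quasi-continuity and the point separation established in (ii)--(iii) persist with respect to $\E_{\A}$. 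With (i)--(iii) verified, the Ma--R\"ockner criterion yields that $(\E_{\A},\D(\E_{\A}))$ is quasi-regular.

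I expect the genuinely delicate point to be the transfer in the last paragraph: because $\A$ is unbounded, the comparison constants $\Lambda(R)$ and $\vv(R)^{-1}$ may grow without bound as $R\to\infty$, so the radii $R_k$ defining the nest and the increment thresholds $(a_j)$ must be chosen jointly --- letting the $\E$-capacities of the complements decay fast enough to absorb the growth of $\Lambda(R_k)$ --- and every capacity estimate must be carried out relative to the cutoff $l_R(\rho)$ so that only the local bounds (A2') and (A3) are ever used. Verifying that the truncations of $d_M$, the dyadic approximation of $\omega_\delta$, and this diagonal choice interact consistently with the $R\to\infty$ limit is where the real work lies.
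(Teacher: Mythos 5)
Your treatment of conditions (ii) and (iii) matches the paper's, and your Arzel\`a--Ascoli strategy for the compact nest would be a legitimate alternative \emph{if} the two-sided comparison you invoke were available. It is not, and this is a genuine gap: assumption (A2') does \emph{not} yield an inequality $\E_{\A}(F,F)\le \Lambda(R)\,\E(F,F)$ for functions supported in $\mathbf{B}_R$. What (\ref{e1a}) and (\ref{e1aa}) control is only $\A^{\frac{1}{2}}$ applied to the special ``tent'' directions $\Psi_{t,v}(s,\gamma)=(s\wedge t)v(\gamma)$ and to $D\phi_R$, in an integrated sense and with the $L^\infty$-norm of $v$ on the right-hand side; the operator $\A(\gamma)$ may be genuinely unbounded on $\H$ even for $\gamma$ in a fixed $\mathbf{B}_R$ (the example closing Section 4, with eigenvalues $\lambda_m(\gamma)$ unbounded in $m$ subject only to the weighted summability (\ref{e13}), is designed precisely for this case). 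Consequently a compact $\E$-nest need not be an $\E_{\A}$-nest, and the problem is not, as you suggest, that $\Lambda(R)$ grows with $R$ --- it is that no such $\Lambda(R)$ exists for fixed $R$. The same obstruction hits your modulus-of-continuity functions if you try to run the argument directly for $\E_{\A}$: the dyadic approximants of $\omega_\delta$ are maxima of $d_M(\gamma(s_i),\gamma(s_j))$ over a number of pairs that blows up as $\delta\downarrow 0$, their gradients are (a.e.) sums of tents based at $\gamma$-dependent pairs of times, and summing the bound (\ref{e1a}) over the pieces of the argmax decomposition produces a factor proportional to the number of pairs, so (A2') gives no uniform $\E_{\A}$-energy bound as $\delta\downarrow0$.

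The paper's proof is engineered around exactly this limitation: the nest is built from the single-time functions $G^j_{\sigma,R}(\gamma)=\phi_R(\gamma)\bigl(\sup_{i\le N}|\varphi_i(\gamma(t_j))-\varphi_i(\sigma(t_j))|\wedge1\bigr)$ coming from a Nash embedding $\varphi$, whose gradients are precisely of the form $(\cdots)D\phi_R+\phi_R\Psi_{t_j,w}$ covered by (A2'), giving the uniform bound $\E_{\A}(G^j_{\sigma,R},G^j_{\sigma,R})\le 2c_1(R)(1+c_2(R))$; the passage to $\sup_j$, to $\inf_{i\le m}G_{\sigma_i,R}$, and to the limits $m\to\infty$, $R\to\infty$ is then handled by the contraction property, $L^2$-convergence, the Banach--Saks trick and closedness of the form, so that compactness of $\mathbf{K}_k$ is obtained as total boundedness in the embedded uniform distance $\bar d$, and no comparison of $\E_{\A}$ with $\E$ is ever needed. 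To repair your argument you would either have to add a hypothesis of the type $\A(\gamma)\le\Lambda(R)\mathbf{I}$ on $\mathbf{B}_R$ (which would exclude the unbounded-diffusion examples the theorem is meant to cover), or replace the $\omega_\delta$-based nest by one whose test functions have gradients of the tent form actually controlled by (A2') --- which is essentially the Driver--R\"ockner construction the paper carries out.
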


The article is organized as following, in the second section, we will prove the closability of
the quadratic form $(\E, \F C_b)$ and $(\E_{\A}, \F C_{b,loc})$.  In the third section,
we will show some functional inequalities for
the O-U Dirichlet form. In the fourth section, we will prove the
quasi-regularity for $(\E_{\A}, \D(\E_{\A}))$. In the Appendix, following the argument in \cite{TW},
we will prove a lemma needed in the proof of Theorem \ref{t1.1}.

\section{The closability of quadratic form}

In this section, we first show that the quadratic form $(\E, \F C_b)$ defined by (\ref{c1.1}) is closable.
The proof below is inspired by the
cut-off procedure for the Dirichlet form, see e.g. \cite[Proposition A.1]{BLW}, and the procedure of the conformal change
for the metric of the based manifold,
see e.g. \cite{TW}, \cite{W}, \cite{WW2}.

\begin{proof}[Proof of Theorem $\ref{t1.1}$]
(1) For every $R \ge 1$, let $B_R:=\{x\in M: d_M(x,o)\leq R\}$, and there exists a non-negative function
$f_R \in C_0^{\infty}(M)$ such that $f_R(x)=1$ for all $ x \in B_R$ and $M_R:=\{x \in M:\ f_R(x)>0\}$ is a
connected open set. We define a metric $\langle, \rangle_R$ on $M_R$ as
\begin{equation*}
\langle, \rangle_R:=f_R^{-2} \langle, \rangle,
\end{equation*}
where $\langle, \rangle$ is the Riemannian metric on $M$. Moreover, by
\cite[Section 2]{TW} and \cite[Lemma 3.4]{FWW}(see also Lemma \ref{l5.1} in the Appendix below),
we know $(M_R, \langle, \rangle_R)$ is a complete Riemannian manifold, and
\begin{equation}\label{e3}
K_R:=\sup_{M_R}\|\Ric^{(R)}\|_R <\infty,
\end{equation}
for every $R\ge 1$, where $\Ric^{(R)}$ denotes the Ricci curvature tensor on $(M_R, \langle, \rangle_R)$.
Hence $M_R$ is stochastically complete. We write $\mu_R$ for the distribution of
the Brownian motion on $C_o(M_R)$,  by the reference listed (see e.g. \cite{Hsu3}) in the introduction, there exists an
O-U Dirichlet form $(\E_R, \D(\E_R))$ on $L^2(\mu_R)$, such that,
\begin{equation*}
\E_R(F,F)=\int_{C_o(M_R)}\langle D_R F, D_R F\rangle_{\H} \d \mu_R, \ \  F \in \F C_b(M_R),
\end{equation*}
where $D_R$ denotes the (closed) gradient operator on $L^2(\mu_R)$.

In order to compare the O-U Dirichlet form $(\E_R, \D(\E_R))$ in different spaces $C_o(M_R)$,
we model them into the same probability space. Suppose that
$(\Omega, \F, \P)$ is a complete probability space, and $W_t$ is a $\R^n$-valued Brownian motion on this space.
We consider the SDE (\ref{c1.0}) on $M$,
\begin{equation}\label{e2a}
\begin{cases}
&\d U_t=\displaystyle\sum^n_{i=1}H_i(U_t)\circ\d W_t^i,\ \ t \in [0,1],\\
& U_0=u_o,
\end{cases}
\end{equation}
so $X_t:=\pi(U_t)$ is the Brownian motion on $M$, $U_{\cdot}$ is the horizontal lift along
$X_{\cdot}$.
Similarly, since $\langle, \rangle_R= \langle, \rangle,$ on $B_R$,
we can choose an orthonormal basis $\{H_{i,R}\}_{i=1}^n$  of horizontal vector fields
on $O(M_R)$ such that $H_{i,R}(u)=H_{i,m}(u)=H_i(u)$ for every $m\ge R$ when $u \in O(M_R)$ satisfies
$\pi(u) \in B_R$. Let $W_t$, $u_0$ be the same as that in (\ref{e2a}),
we consider the following SDE,
\begin{equation*}
\begin{cases}
&\d U_{t,R}=\displaystyle\sum^n_{i=1}H_{i,R}(U_{t,R})\circ\d W_t^i,\ \ t \in [0,1],\\
& U_{0,R}=u_o,
\end{cases}
\end{equation*}
so $X_{\cdot,R}:=\pi(U_{\cdot,R})$ is the Brownian motion on $M_R$, $U_{\cdot,R}$ is the horizontal lift
along $X_{\cdot,R}$ on $M_R$. Moreover, let $\tau_R:=
\inf\{t\ge 0: X_t \notin B_R\}$,
%Since $\langle, \rangle_R:=f_R^{-2} \langle, \rangle,$ on $B_R$, we have $\frac{1}{2}\triangle^{(R)}=\frac{1}{2}\triangle$ on $B_R$, here $\triangle$ and $\triangle^{(R)}$ are Lapalace operators of $M$ and $M_R$ respective.  Thus
we have $U_{t,R}=U_{t,m}=U_t$ $\P$-$a.s.$ for every
$m \ge R, t \le \tau_R$.

Suppose $\{F_k\}_{k\ge 1}\subset \F C_b$ satisfy
\begin{equation}\label{e4}
\begin{split}
\lim_{k \rightarrow \infty}\mu(F_k^2)=0,\ \ \
\lim_{k,m \rightarrow \infty}\E(F_k-F_m,F_k-F_m)=0.\end{split}
\end{equation}
For every $R\ge 1$, we can find a
$l_R\in C^\infty_0(\R)$ such that
\begin{equation}\label{e0a}
l_R(r)=\begin{cases}
& 1,\quad\quad\quad\text{if}\ |r|\le R-1,\\
& \in [0,1], ~\text{if}\ R-1<|r|<R,\\
&0,\quad\quad\quad\text{if}\ |r|\ge R,
\end{cases}
\end{equation}
and $ \sup_{r \in \R}|l_R'(r)|\leq2$. Let
$d_R$ be the Riemannian distance on $M_R$ and
$$\rho_R(\gamma):=\sup_{t \in [0,1]}d_R(\gamma(t),o), \ \ \phi_R(\gamma):=l_R(\rho_R(\gamma)),\quad \gamma \in C_o(M_R).$$
We denote the gradient operator on $M_R$ by $\nabla^R$.
Since $|\nabla^R d_R|_R \le 1$ and the O-U gradient operator $D_R$ on $L^2(\mu_R)$ is closed due to
(\ref{e3}), as the same argument in
the proof of \cite[Lemma 2.2]{A} or \cite[Proposition 3.1]{RS}, we have
$\phi_R \in \D(\E_R)$, and $\|D_R \phi_R(\gamma)\|_{\H}\le 2$
for every $R \ge 1$ and $\mu$-$a.s.$ $\gamma \in C_o(M)$. Note that $l_R(r)=0$ if $r \ge R$ and $\rho_R(\gamma)=\rho(\gamma)$
for each $\gamma \in C_o(M_R) \subseteq C_o(M)$ satisfying
$\rho_R(\gamma)\le R$,  so we can extend $\phi_R$ to be defined in
$C_o(M)$ by $\phi_R(\gamma):=l_R(\rho(\gamma))$ for $\mu$-$a.s.$ $\gamma \in C_o(M)$.

Since $\{F_k\}_{k\ge 1}\subset \F C_b$, we may assume that for each $k\ge 1$,
$$F_k(\gamma)=f_k\big(\gamma(t_1),\cdots, \gamma(t_{j_k})\big),\quad \gamma\in C_o(M)$$
for some
$f_k \in C_b^{Lip}(M^{j_k}), j_k\ge 1$ and $0<t_1<\cdots <t_{j_k}\le 1$. Let $F_{k,R}(\gamma):=\phi_{R}(\gamma)F_k(\gamma)$,
since $\phi_R(\gamma) \neq 0$ only if
$\rho(\gamma)\le R$,
we can replace $f_k \in C_b^{Lip}(M^{j_k})$ by some $\tilde f_k \in C_b^{Lip}(M_R^{j_k})$ such that
$f_k(x)=\tilde f_k(x)$, $\forall \ x \in B_R^{j_k}$ in the definition of
$F_{k,R}$, then $F_{k,R}\big |_{C_o(M_R)}
\in \F C_{b,loc}(M_R) \subseteq \D(\E_R)$.
Note that $\rho(X_{\cdot})=\rho_R(X_{\cdot,R})\le R$ implies $X_{\cdot}
=X_{\cdot,R}$ and $U_{\cdot}
=U_{\cdot,R}$ $\P$-$a.s.$, hence it is easy to see that,
\begin{equation}\label{e4a}
D_R F_{k,R}(X_{\cdot,R})=\phi_R(X_{\cdot,R}) DF_k(X_{\cdot})+
F_k (X_{\cdot})D_R \phi_R(X_{\cdot,R}).
\end{equation}
Then we obtain, for every fixed $R \ge 1$,
\begin{equation}\label{c2.8}\aligned
&\E_R(F_{k,R}-F_{m,R}, F_{k,R}-F_{m,R})=\int
\|D_R F_{k,R}(X_{\cdot,R})-D_R F_{m,R}(X_{\cdot,R})\|_{\H}^2\d\P\\
&\le 2\int\phi_R^2(X_{\cdot,R}) \|DF_k(X_{\cdot})-DF_m(X_{\cdot})\|_{\H}^2\d\P\\&
~~~+2\int \|D_R\phi_R(X_{\cdot,R})\|_{\H}^2 |F_k(X_{\cdot})-F_m(X_{\cdot})|^2\d \P\\
&\le 2\int \|DF_k(X_{\cdot})-DF_m(X_{\cdot})\|_{\H}^2\d\P+4\int |F_k(X_{\cdot})-F_m(X_{\cdot})|^2\d \P\\
&=2\E(F_k-F_m,F_k-F_m)+4 \mu(|F_k-F_m|^2),
%&\leq2\int_{W_x(M)}(\nabla l_{k-1})^2|F_n-F_m|^2\d\mu+2\int_{W_x(M)}l^2_{k-1}|(\nabla F_n-\nabla F_m)|^2\d
%\P_x\\&
%\leq4\int_{W_o(M)}|F_n-F_m|^2\d\P_x+2\int_{W_x(M)}|(\nabla F_n-\nabla F_m)|^2\d\P_x.
\endaligned
\end{equation}
where in the second inequality above, we use the property that $\phi_R \le 1$ and
$\|D_R \phi_R(\gamma)\|_{\H}\le 2$.
According (\ref{e4}), we have
\begin{equation}\label{e5}
\lim_{k,m\rightarrow\infty} \E_R(F_{k,R}-F_{m,R}, F_{k,R}-F_{m,R})=0.
\end{equation}
Note that by (\ref{e4}) and
as the same procedure above, it is not
difficult to check that,
\begin{equation}\label{e5a}
\lim_{k,m\rightarrow\infty}\mu_R(|F_{k,R}- F_{m,R}|^2)
\le \lim_{k,m\rightarrow\infty}\mu(|F_{k}- F_{m}|^2)=0.
\end{equation}
As mentioned earlier, $(\E_R, \D(\E_R))$ is closed due to (\ref{e3}), by (\ref{e5})
and (\ref{e5a}), we derive for every fixed $R\ge 1$,
\begin{equation}\label{e6}
\lim_{k\rightarrow\infty}\E_R(F_{k,R},F_{k,R})=0.
\end{equation}
We define $\mathbf{B}_R \subseteq C_o(M)$
by
\begin{equation}\label{e6aa}
\mathbf{B}_R:=\{\gamma \in C_o(M):\ \rho(\gamma)\le R\}.
\end{equation}
For every $k,m,R\ge 1$,
\begin{equation}\label{e6a}
\aligned
\E(F_k,F_k)&=\int\|D F_k (X_{\cdot})\|_{\H}^2\d\P=
\int \|D F_k(X_{\cdot})-D_R F_{k,R}(X_{\cdot,R})+D_R F_{k,R}(X_{\cdot,R})\|_{\H}^2 \d \P\\
&\le 2\int \|D F_k(X_{\cdot})-
D_R F_{k,R}(X_{\cdot,R})\|_{\H}^2 \d \P+ 2\E_R(F_{k,R},F_{k,R})\\
& \leq 4\int (1-\phi_R(X_{\cdot,R}))^2\|DF_k(X_{\cdot})\|_{\H}^2\d\P\\&~~~~~~~~~~~+
4\int \|D_R \phi_R(X_{\cdot,R})\|_{\H}^2F_k^2(X_{\cdot})\d \P+2\E_R(F_{k,R},F_{k,R})\\
&\le 4\int_{\mathbf{B}^c_{R-1}}\|DF_k(\gamma)\|_{\H}^2\d \mu+8\int F_k^2(\gamma)\d \mu+2\E_R(F_{k,R},F_{k,R})\\
&\le 8\int_{\mathbf{B}^c_{R-1}}\|DF_m(\gamma)\|_{\H}^2\d \mu+8\E(F_k-F_m,F_k-F_m)\\&
~~~~~~~~~~~~~~~~~~~
+8\int F_k^2(\gamma)\d \mu+2\E_R(F_{k,R},F_{k,R}),
%&
%\leq8\lim_{n\rightarrow\infty}\int_{A^c_{k-1}}F_n^2\d
%\P_x+2\lim_{n\rightarrow\infty}\int_{A^c_{k-1}}(1-l_{k-1})^2|\nabla F_n|^2\d
%\P_x
\endaligned
\end{equation}
where in the second inequality above, we use (\ref{e4a}),
the third inequality is due to
the property that $\|D_R \phi_R(\gamma)\|_{\H}\le 2$ and
$\phi_R(X_{\cdot,R}) \neq 1$
only if $\rho_R (X_{\cdot,R})>R-1$, thus $\rho (X_{\cdot})>R-1$, and
the complement of $\mathbf{B}_R$ is denoted by $\mathbf{B}_R^c$.
According to (\ref{e4}) and (\ref{e6}), we obtain for every
fixed $R,m \ge 1$,
\begin{equation*}
\begin{split}
&\limsup_{k\rightarrow \infty}\E(F_k,F_k)\le
8\int_{\mathbf{B}^c_{R-1}}\|DF_m(\gamma)\|_{\H}^2\d \mu+8\limsup_{k \rightarrow \infty}\E(F_k-F_m,F_k-F_m),
\end{split}
\end{equation*}
and in the above inequality, first let $R \rightarrow \infty$ then $m \rightarrow \infty$, we get
\begin{equation*}
\limsup_{k \rightarrow \infty}\E(F_k,F_k)=0,
\end{equation*}
hence $(\E, \F C_b)$ is closable. Let $(\E, \D(\E))$ be the closed extension of
of $(\E, \F C_b)$, it is easy to show the contraction property of $(\E, \D(\E))$, for example, just repeating
the step (b) in the proof of \cite[Proposition 2.1]{WW1}. So we have proved
$(\E, \D(\E))$ is a symmetric Dirichlet form.
\end{proof}

\begin{lem}\label{l2.1}
For every $l \in C_0^{\infty}(\R)$, $l(\rho) \in \D(\E)$, and
 \begin{equation}\label{t2.1.1}
 \|Dl(\rho(\gamma))\|_{\H}\le \sup_{r \in \R}|l'(r)|,\ \ \ \ \ \mu- a.s. \ \gamma \in C_o(M).
 \end{equation}
\end{lem}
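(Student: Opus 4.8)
The plan is to realize $l(\rho)$ as an $L^2(\mu)$-limit of genuine cylinder functions whose O-U gradients are uniformly bounded by $\sup_r|l'(r)|$, and then to invoke the closedness of $(\E,\D(\E))$ established in Theorem \ref{t1.1} together with a weak-compactness argument in $L^2(\mu;\H)$ to transfer both the membership and the pointwise gradient bound to the limit. This is the clean version of the cut-off estimate $\|D_R\phi_R\|_\H\le 2=\sup_r|l_R'(r)|$ used for $\phi_R=l_R(\rho_R)$ in the proof of Theorem \ref{t1.1}, now carried out for the closed form $\E$ on $C_o(M)$ itself and using only the elementary fact that $|\nabla d_M(\cdot,o)|\le 1$.

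First I would discretize $\rho$ in time. For a partition $0<s_1<\cdots<s_N\le 1$ of mesh $\delta$, set
$$\rho_\delta(\gamma):=\max_{1\le j\le N} d_M(\gamma(s_j),o),\qquad F_\delta:=l(\rho_\delta).$$
Since $d_M(\cdot,o)$ is $1$-Lipschitz and $l\in C_0^\infty(\R)$, the map $(x_1,\dots,x_N)\mapsto l\big(\max_j d_M(x_j,o)\big)$ is bounded and Lipschitz on $M^N$, so $F_\delta\in\F C_b$. Using continuity of $t\mapsto d_M(\gamma(t),o)$ one has $\rho_\delta(\gamma)\uparrow\rho(\gamma)$ as the partition refines, hence $F_\delta\to l(\rho)$ pointwise and, being uniformly bounded, also in $L^2(\mu)$ by dominated convergence.

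Next I would bound the gradient pointwise via the chain rule, the locality of $D$, and the single-time estimate. For $d_{s_j}(\gamma):=d_M(\gamma(s_j),o)$ the explicit formula for $DF$ gives $(Dd_{s_j}(\gamma))_s=(s\wedge s_j)U_{s_j}(\gamma)^{-1}\nabla d_M(\gamma(s_j),o)$, whence, since $U_{s_j}$ is an isometry,
$$\|Dd_{s_j}(\gamma)\|_\H^2=s_j\,|\nabla d_M(\gamma(s_j),o)|^2\le s_j\le 1.$$
As $\rho_\delta$ is a finite maximum of the $d_{s_j}$, locality gives $\|D\rho_\delta(\gamma)\|_\H\le\max_j\|Dd_{s_j}(\gamma)\|_\H\le 1$ $\mu$-a.s., and the chain rule yields $\|DF_\delta(\gamma)\|_\H=|l'(\rho_\delta(\gamma))|\,\|D\rho_\delta(\gamma)\|_\H\le\sup_r|l'(r)|$, $\mu$-a.s.; in particular $\E(F_\delta,F_\delta)\le(\sup_r|l'(r)|)^2$ uniformly in $\delta$. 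Along a subsequence $DF_\delta\rightharpoonup G$ weakly in $L^2(\mu;\H)$; since $F_\delta\to l(\rho)$ in $L^2(\mu)$ and $(\E,\D(\E))$ is closed by Theorem \ref{t1.1}, the closed-graph characterization forces $l(\rho)\in\D(\E)$ with $Dl(\rho)=G$. The convex set $\{G\in L^2(\mu;\H):\|G(\gamma)\|_\H\le\sup_r|l'(r)|\ \mu\text{-a.s.}\}$ is strongly closed, hence weakly closed, and contains every $DF_\delta$; therefore the weak limit satisfies $\|Dl(\rho(\gamma))\|_\H\le\sup_r|l'(r)|$ $\mu$-a.s., which is (\ref{t2.1.1}).

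The main obstacle is that two sources of non-smoothness require care. First, $d_M(\cdot,o)$ and the maximum are only Lipschitz (issues at $o$, on the cut locus, and on the set where the argmax is non-unique); here one uses that $\nabla_i f$ in the definition of $D_hF$ is the \emph{distributional} gradient, defined almost everywhere with respect to the Riemannian volume, together with the absolute continuity of the law of $\gamma(s_j)$ under $\mu$, so that the gradient formula and the a.e. bound $|\nabla\max_j d_{s_j}|\le\max_j|\nabla d_{s_j}|\le 1$ are legitimate $\mu$-a.s. Second, one must retain the \emph{pointwise} bound (\ref{t2.1.1}) rather than merely an energy estimate in the limit; this is precisely why I pass through weak $L^2(\mu;\H)$-convergence and exploit weak closedness of the convex constraint set instead of relying on the bound $\E(F_\delta,F_\delta)\le(\sup_r|l'(r)|)^2$ alone.
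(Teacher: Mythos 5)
Your proof is correct and follows essentially the same route as the paper: approximate $l(\rho)$ by the cylinder functions $l(\max_j d_M(\gamma(s_j),o))\in\F C_b$, obtain the uniform a.s.\ bound $\|DF_\delta\|_{\H}\le\sup_r|l'(r)|$ from $|\nabla d_M(\cdot,o)|\le 1$ and the Lipschitz-$1$ property of the max, and pass to the limit using closedness of $(\E,\D(\E))$. The only cosmetic difference is that the paper extracts a strongly convergent sequence of Ces\`aro means via the Banach--Saks property, whereas you use weak compactness in $L^2(\mu;\H)$ together with weak closedness of the graph of $D$ and of the convex constraint set; these are interchangeable.
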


\begin{proof}
We follow the argument in the proof of \cite[Lemma 2.2]{A} or \cite[Proposition 3.1]{RS}.
We take a countable dense subset $\{t_i\}_{i=1}^{\infty}$ in $(0,1]$, and define
\begin{equation}\label{c*}
\rho^m(\gamma):=\sup_{1\le i \le m}d_M(\gamma(t_i),o),\ \
\phi^m(\gamma):=l(\rho^m(\gamma)),\quad \gamma \in C_o(M).
\end{equation}
It is obvious that $\phi^m:=l(\rho^m) \in \F C_b$. Note that
$\rho^m=g\Big(\big(d_M(\gamma(t_1),o),\dots,d_M(\gamma(t_m),o)\big)\Big)$, where
$$g(s):=\max_{1\le i \le m}s_i, \quad s=(s_1,\dots,s_m)\in \R^m,$$
since $g(s)$ is a Lipschitz continuous function on $\R^m$
with Lipschitz constant $1$, and $|\nabla d_M(o,x)|\le 1$, then for every $m \ge 1$, we have
\begin{equation*}
\|D \phi^m(\gamma)\|_{\H}\le \sup_{r \in \R}|l'(r)|,\ \ \ \mu-a.s. \ \gamma \in C_o(M).
\end{equation*}
By the Banach-Saks property, there exists a subsequence $\{\phi^{m_i}\}_{i=1}^{\infty}$ of $\{\phi^{m}\}$ such that
for $S_N:=\frac{1}{N}\sum_{i=1}^N \phi^{m_i}$, $\{D S_N\}_{N=1}^{\infty}$ is convergent in $L^2(\mu)$. Since
$\lim_{N \rightarrow \infty}\mu(|S_N-l(\rho)|^2)=0$, and $(\E, \D(\E))$ is closed, we obtain
$l(\rho) \in \D(\E)$ and (\ref{t2.1.1}) holds.
\end{proof}

We call $(\E, \D(\E))$ the O-U Dirichlet form on $L^2(\mu)$. Let $\F C_{b,loc}$ be
defined by (\ref{e1}), by Lemma \ref{l2.1},
we know $\F C_{b,loc}\subseteq \D(\E)$. Hence
the quadratic form $(\E_{\A}, \F C_{b,loc})$ is well defined by (\ref{e2}).
Furthermore, inspired
by \cite[Theorem 2.2]{L} and \cite[Proposition 2.1]{WW1}, we can show the closability
of $(\E_{\A}, \F C_{b,loc})$.

\begin{prp}\label{p2.1}
 Suppose (A1), (A2) and (A3) hold. The quadratic form $(\E_{\A},$ $ \F C_{b,loc})$ is closable on
 $L^2(\mu)$, and its closed
 extension $(\E_{\A}, \D(\E_{\A}))$ is a Dirichlet form.
\end{prp}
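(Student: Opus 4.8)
The plan is to prove closability of $(\E_{\A}, \F C_{b,loc})$ by reducing it to a localized version of the closability of the O-U form already established in Theorem \ref{t1.1}, but now accounting for the random diffusion coefficient $\A$. Recall that closability means: whenever $\{F_k\}\subset \F C_{b,loc}$ satisfies $\mu(F_k^2)\to 0$ and $\E_{\A}(F_k-F_m, F_k-F_m)\to 0$ as $k,m\to\infty$ (a Cauchy sequence for the $\E_{\A}$-seminorm converging to $0$ in $L^2$), then $\E_{\A}(F_k, F_k)\to 0$. First I would fix $R\ge 1$ and restrict attention to paths in $\mathbf{B}_R=\{\rho(\gamma)\le R\}$, where by assumption (A3) we have the uniform lower bound $\A(\gamma)\ge \vv(R)\mathbf{I}$. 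The key observation is that on $\mathbf{B}_R$ the diffusion form dominates a multiple of the O-U form: for any $F\in \F C_{b,loc}$,
\begin{equation*}
\int_{\mathbf{B}_R}\|DF\|_{\H}^2\,\d\mu \le \vv(R)^{-1}\int_{\mathbf{B}_R}\big|\A(\gamma)^{1/2}DF\big|_{\H}^2\,\d\mu \le \vv(R)^{-1}\E_{\A}(F,F).
\end{equation*}
This lets me transfer the $\E_{\A}$-Cauchy condition into control on the ordinary gradient $D$ over each bounded region.

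The central step is to upgrade the $L^2$-convergence $\mu(F_k^2)\to 0$ together with $\E_{\A}$-Cauchyness into a statement about the \emph{limit} of $\A^{1/2}DF_k$ in $L^2(\mu;\H)$. Since $\E_{\A}(F_k-F_m,\cdot)\to 0$, the sequence $\A^{1/2}DF_k$ is Cauchy in $L^2(C_o(M),\mu;\H)$, hence converges to some $\Phi\in L^2(\mu;\H)$; closability is exactly the assertion that $\Phi=0$. To identify $\Phi$, I would test against a cut-off region: for each fixed $R$, the estimate above shows $\mathbf{1}_{\mathbf{B}_R}DF_k$ is Cauchy in $L^2(\mu;\H)$, so it converges to some $G_R$; but since $\mu(F_k^2)\to 0$ and the O-U form $(\E,\D(\E))$ is itself \emph{closed} (Theorem \ref{t1.1}), the localized closability forces $\mathbf{1}_{\mathbf{B}_R}DF_k\to 0$, i.e.\ $G_R=0$, for every $R$. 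Concretely, one multiplies $F_k$ by the cut-off $\phi_R=l_R(\rho)$ (which lies in $\D(\E)$ by Lemma \ref{l2.1} with $\|D\phi_R\|_\H\le 2$) to produce $\phi_R F_k\in \D(\E)$ satisfying $\mu((\phi_R F_k)^2)\to 0$ and, via the product rule $D(\phi_R F_k)=\phi_R DF_k + F_k D\phi_R$ together with the $\vv(R)^{-1}$ domination, an $\E$-Cauchy property; closedness of $(\E,\D(\E))$ then yields $\phi_R DF_k\to 0$ in $L^2(\mu;\H)$.

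Having shown $\mathbf{1}_{\mathbf{B}_R}DF_k\to 0$ in $L^2(\mu;\H)$ for every $R$, I would conclude that $\A^{1/2}DF_k\to 0$ on each $\mathbf{B}_R$, because on $\mathbf{B}_R$ the operator $\A^{1/2}$ is dominated in the relevant sense once we know $DF_k\to 0$ there and the $\A^{1/2}DF_k$ are already $L^2$-Cauchy; matching the two limits forces $\Phi=0$ on $\mathbf{B}_R$. Letting $R\to\infty$ and using that $\mu(\mathbf{B}_R^c)\to 0$ (since $\mu$ is a probability measure and $\rho<\infty$ $\mu$-a.s.) gives $\Phi=0$ on all of $C_o(M)$, hence $\E_{\A}(F_k,F_k)=\|\A^{1/2}DF_k\|_{L^2(\mu;\H)}^2\to 0$, establishing closability. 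The resulting closed extension $(\E_{\A},\D(\E_{\A}))$ is then shown to be a Dirichlet form by verifying the Markovian (contraction) property on the generators $\F C_{b,loc}$ exactly as in the final paragraph of the proof of Theorem \ref{t1.1}, namely by the normal contraction argument from \cite[Proposition 2.1]{WW1}.

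The main obstacle I anticipate is the interchange of the two limits---identifying the $L^2$-limit $\Phi$ of $\A^{1/2}DF_k$ with $\A^{1/2}$ applied to the (a priori only regional) limit of $DF_k$. Because $\A(\gamma)$ is unbounded and only self-adjoint on its domain (A1), one cannot simply write $\lim \A^{1/2}DF_k = \A^{1/2}\lim DF_k$; the argument must be confined to the region $\mathbf{B}_R$ where the lower bound $\vv(R)\mathbf{I}$ is available, and one must argue that the contribution from $\mathbf{B}_R^c$ is controlled by the $\E_{\A}$-Cauchy tail rather than by $\A$ directly. Carefully organizing the double limit ($k,m\to\infty$ first, then $R\to\infty$) so that no uncontrolled term involving $\A$ on $\mathbf{B}_R^c$ survives is the delicate point, and it is precisely what the structural assumption (A3)---uniform ellipticity on each bounded set---is designed to make tractable.
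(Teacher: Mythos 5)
Your overall architecture matches the paper's: reduce to showing that the $L^2(\mu;\H)$-limit $\Phi$ of $\A^{1/2}DF_k$ vanishes, localize to $\mathbf{B}_R$ where (A3) gives $\int_{\mathbf{B}_R}\|DF\|_{\H}^2\,\d\mu\le \vv(R)^{-1}\E_{\A}(F,F)$, use a cut-off and the closedness of an O-U form to get $DF_k\to 0$ on $\{\rho\le R-1\}$, and then match the two limits. One genuine (and harmless) difference: you invoke the closedness of the global O-U form $(\E,\D(\E))$ from Theorem \ref{t1.1} together with the product rule for $\phi_R F_k\in\F C_{b,loc}\subseteq\D(\E)$, whereas the paper routes through the localized forms $(\E_R,\D(\E_R))$ on the conformally changed manifolds $C_o(M_R)$ (its estimate (\ref{e7a})). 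Your version is slightly more economical since Theorem \ref{t1.1} is already in hand; both deliver the same conclusion that $\mathbf{1}_{\{\rho\le R-1\}}DF_k\to 0$ in $L^2(\mu;\H)$.

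However, the final step --- concluding $\Phi=0$ on $\{\rho\le R-1\}$ from ``$DF_k\to 0$ there and $\A^{1/2}DF_k\to\Phi$'' --- is exactly where your argument stops short. You describe $\A^{1/2}$ as ``dominated in the relevant sense'' on $\mathbf{B}_R$ and then, in your closing paragraph, explicitly flag the interchange of limits as an unresolved obstacle. But no upper domination of $\A$ on $\mathbf{B}_R$ is available or needed; (A3) only bounds $\A$ from \emph{below}. The correct and complete resolution, which is the paper's: for each fixed $R$ pass to a subsequence along which both $\|DF_k(X_\cdot)\|_{\H}\to 0$ and $\|\A(X_\cdot)^{1/2}DF_k(X_\cdot)-\Phi(X_\cdot)\|_{\H}\to 0$ hold $\P$-a.s.\ on $\{\rho(X_\cdot)\le R-1\}$; then for $\mu$-a.e.\ fixed $\gamma$ the points $\big(DF_k(\gamma),\A(\gamma)^{1/2}DF_k(\gamma)\big)$ lie in the graph of $\A(\gamma)^{1/2}$ and converge to $(0,\Phi(\gamma))$, and since $\A(\gamma)^{1/2}$ is self-adjoint by (A1), hence a \emph{closed} operator, the limit point lies in the graph, forcing $\Phi(\gamma)=\A(\gamma)^{1/2}(0)=0$. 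Letting $R\to\infty$ gives $\Phi=0$ $\mu$-a.s. This pointwise closedness of the operator (not any domination, and not an $L^2$ argument) is the missing ingredient; with it inserted, your proof is complete, and the Markovian property then follows from the normal contraction argument of \cite[Proposition 2.1]{WW1} as you indicate.
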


\begin{proof}
It is not difficult to show that $\F C_{b,loc}$ is dense in
$L^2(\mu)$ since $\F C_b$ is dense. Suppose $\{F_k\}_{k\ge 1}\subset \F C_{b,loc}$ satisfy
\begin{equation}\label{e7}
\begin{split}
\lim_{k \rightarrow \infty}\mu(F_k^2)=0,\ \ \
\lim_{k,m \rightarrow \infty}\E_{\A}(F_k-F_m,F_k-F_m)=0.
\end{split}
\end{equation}
Let $X_{\cdot,R}$, $X_{\cdot}$, $\phi_R$, $F_{k,R}$, $\mathbf{B}_R$ be the same terms as that in the proof of
Theorem \ref{t1.1}. From (\ref{e7}), we know $\{\A^{\frac{1}{2}}DF_k\}_{k=1}^{\infty}$
is a Cauchy sequence in $L^2(C_o(M)\rightarrow \H;\mu)$, hence there exists a
$\Phi \in L^2(C_o(M)\rightarrow \H;\mu)$, such that,
\begin{equation}\label{e8}
\lim_{k \rightarrow \infty}\int \big\|\A^{\frac{1}{2}}DF_k-\Phi\big\|_{\H}^2 \d\mu=0.
\end{equation}
In order to prove the closability of $\E_{\A}$, it suffices to show $\Phi=0$.

By assumption (A3), $\A^{-\frac{1}{2}}(\gamma)$ is a bounded operator on $\A(\gamma)^{\frac{1}{2}}
\big(\D(\A(\gamma)^{\frac{1}{2}})\big)$ with
$||\A(\gamma)^{-\frac{1}{2}}||\le \frac{1}{\sqrt{\vv(R)}}$ for $\mu$-$a.s.$ $\gamma\in \mathbf{B}_R$,
as the same argument for (\ref{c2.8}), we obtain
\begin{equation}\label{e7a}
\aligned
&\E_R (F_{k,R}-F_{m,R}, F_{k,R}-F_{m,R})\\&\le 2\int_{\mathbf{B}_R} \|
\mathbf{A}^{-\frac{1}{2}}\mathbf{A}^{\frac{1}{2}}(DF_k(X_{\cdot})-DF_m(X_{\cdot}))\|_{\H}^2\d\P
+4 \mu(|F_k-F_m|^2)\\&\le\frac{2\E_{\A}(F_k-F_m,F_k-F_m)}{\vv(R)}
+4\mu(|F_k-F_m|^2),
\endaligned
\end{equation}
hence from (\ref{e7}), $\lim_{k,m \rightarrow \infty}\E_R (F_{k,R}-F_{m,R}, F_{k,R}-F_{m,R})=0$,
then (\ref{e6}) is still true due to the closability of $(\E_R, \D(\E_R))$.

Note that $D_R F_{k,R}(X_{\cdot,R})=D F_k(X_{\cdot})$ for $\P$-$a.s.$ $\omega \in \Omega$ such that
$\rho(X_{\cdot})\le R-1$,
by (\ref{e6}) and (\ref{e8}), for every
$R\ge 1$, taking a subsequence if necessary (the subsequence may depend on
$R$),
\begin{equation}\label{e7aa}
\begin{split}
& \lim_{k \rightarrow \infty}\|DF_k(X_{\cdot})\|_{\H}=0,\\
&\lim_{k \rightarrow \infty}\|\mathbf{A}(X_{\cdot})^{\frac{1}{2}}(DF_k(X_{\cdot}))-
\Phi(X_{\cdot})\|_{\H}=0,\  \P-\ a.s.\ \omega \in \Omega\ \text{with}\ \rho(X_{\cdot})\le R-1.
\end{split}
\end{equation}
Since $\A(X_{\cdot})^{\frac{1}{2}}$ is closed, from (\ref{e7aa}) we know for
every $R\ge 1$,
$\Phi(X_{\cdot})=0$ for $\P$-$a.s.$ $\omega \in \Omega$ with $\rho(X_{\cdot})\le R-1$.  Note that
$R$ is arbitrary, we have $\Phi(\gamma)=0$ for $\mu$-$a.s.$ $\gamma \in C_o(M)$.
%According to (\ref{e8}), we know $\Phi(\gamma) \in \A(\gamma)^{\frac{1}{2}}\big(
%\D(\A(\gamma)^{\frac{1}{2}})\big)$ a.s..  Since
%$\A(\gamma)^{\frac{1}{2}}$ is closed, then, for every $R \ge 1$, we have
%\begin{equation*}
%\begin{split}
%&\lim_{n \rightarrow \infty}\int\big\|DF_{k,R}-\A^{-\frac{1}{2}}(\phi_R \Phi)\big\|_{\H}^2 \d \mu=
% \lim_{n \rightarrow \infty}\int\Big\|\A^{-\frac{1}{2}}
% \big(\A^{\frac{1}{2}}DF_{k,R}\big)-\A^{-\frac{1}{2}}(\phi_R \Phi)\Big\|_{\H}^2\d \mu\\
%&\le  2\lim_{n \rightarrow \infty}\Big(\int\Big\|\A^{-\frac{1}{2}}\big(
%\phi_R\A^{\frac{1}{2}}DF_n-\phi_R\Phi\big)\Big\|_{\H}^2 \d \mu+ 2\int
%\|D\phi_R\|_{\H}^2 F_n^2\d \mu\Big)\\
%&\le \frac{2}{\vv(R)}\lim_{n \rightarrow \infty}\int
%\big\|\A^{\frac{1}{2}}DF_{n}-\Phi\big\|_{\H}^2 \d\mu+4\lim_{n \rightarrow \infty}\int F_n^2 \d \mu=0.
%\end{split}
%\end{equation*}
%On the other hand, by the closability of $(\E_R, \D(\E_R))$ and (\ref{e6}), we know
%\begin{equation*}
%\lim_{n \rightarrow \infty}\int\|DF_{n,R}\|_{\H}^2 \d \mu=0,
%\end{equation*}
%hence we have $\A(\gamma)^{-\frac{1}{2}}(\phi_R(\gamma) \Phi(\gamma))=0$ for almost every
%$\gamma \in C_o(M)$ and every $R\ge 1$, then
%$\phi_R \Phi=0\ a.s.$ for every $R$, which implies $\Phi=0\ a.s.$ since $R$ is arbitrary. By now we have
%proved that $(\E_{\A}, \F C_{b,loc})$ is closable.

Let $(\E_{\A}, \D(\E_{\A}))$ be the closed extension of $(\E_{\A}, \F C_{b,loc})$, as the same argument in the step
(b) in the proof of \cite[Proposition 2.1]{WW1}, we can show the contraction property
of $\E_{\A}$, hence $(\E_{\A}, \D(\E_{\A}))$ is a Dirichlet form.
\end{proof}

For a suitable choice of $\A$, we will give the following example of $(\E_{\A}, \D(\E_{\A}))$,
which is the damped O-U Dirichlet form studied in
\cite{EL}, \cite{FM} when the based manifold is compact.
\\

\begin{exa}\label{ex1}
(Damped O-U Dirichlet form)
\end{exa}

As in \cite{EL} and \cite{FM},
we can define $\A^{\frac{1}{2}}(\gamma)$ pointwise.
For every $\gamma \in C_o(M)$ and $0\le t \le s \le 1$, let
$\Phi_{t,s}(\gamma) \in L(\R^n ; \R^n)$ be the solution of the following linear ODE,
\begin{equation*}
\frac{\d \Phi_{t,s}(\gamma)}{\d s}=-\frac{1}{2}\text{Ric}_{U_{\cdot}(\gamma)}^{\sharp}\big(\gamma(s)\big)\Phi_{t,s}(\gamma),
\ \  \Phi_{t,t}=\mathbf{I},\ 0\le t\le s \le 1,
\end{equation*}
where $U_{\cdot}(\gamma)$ is the horizontal lift along $\gamma$, and
$\text{Ric}_{U_{\cdot}(\gamma)}^{\sharp}\big(\gamma(s)\big)\in L(\R^n;\R^n)$ is defined by
$\langle \text{Ric}_{U_{\cdot}(\gamma)}^{\sharp}\big(\gamma(s)\big)a, b\rangle $
$=\big \langle \text{Ric}(\gamma(s))\big(U_s(\gamma)a\big), U_s(\gamma)b\big\rangle_{T_{\gamma(s)}M} $ for
every $a,b \in \R^n$. And we write $\Phi_s(\gamma):=\Phi_{0,s}(\gamma)$
for simplicity. Let $K(\gamma)$ and $K_1(\gamma)$ be defined by (\ref{e15aa}),
it is not difficult to see that
\begin{equation}\label{e14}
\|\Phi_{t,s}(\gamma)\|^2\le \e^{-K_1(\gamma)},\quad \gamma\in C_o(M).
\end{equation}

For every $\gamma \in C_o(M)$, we define $\hat \A(\gamma):\H \rightarrow \H$ as following,
\begin{equation}\label{e14a}\aligned
&\big(\hat \A(\gamma) h\big)(t)\\&=
h(t)-\frac{1}{2}\int_0^t \big(\Phi_r(\gamma)^{*}\big)^{-1}\int_r^1
\Phi_s(\gamma)^{*}\text{Ric}_{U_{\cdot}(\gamma)}^{\sharp}\big((\gamma(s))\big)
h'(s)ds,\ h \in \H.\endaligned
\end{equation}
where $\Phi_r(\gamma)^{*}$ denotes the adjoint operator of $\Phi_r(\gamma)$.
Note that
$\big(\Phi_r(\gamma)^{*}\big)^{-1}\Phi_s(\gamma)^{*}=\Phi_{r,s}(\gamma)^{*}$,
from (\ref{e14}) we know,
\begin{equation}\label{e15a}
\|\hat \A(\gamma)\|^2 \le 2\bigg(1+\frac{K(\gamma)^2 \e^{-K_1(\gamma)}}{4}\bigg),\
\ \ \gamma \in C_o(M),
\end{equation}
thus $\hat \A(\gamma)$ is a bounded operator.
Let $\A(\gamma):=(\hat \A(\gamma))^*\hat \A(\gamma)$, then
assumption (A1), (A2) is true for $\A$.

On the other hand,
\begin{equation}\label{e15}
\langle \hat \A(\gamma) h_1, h_2\rangle_{\H}:=
\langle h_1, \hat h_2(\gamma)\rangle_{\H},\ \  h_1,h_2 \in \H,
\end{equation}
where $\hat h_2(t,\gamma):=\Phi_t(\gamma)\int_0^t
\Phi_s(\gamma)^{-1} h_2'(s)ds $.
% and satisfies the following,
%\begin{equation*}
%\tilde h_2'(t,\gamma)+\frac{1}{2}
%\text{Ric}_{U_{\cdot}(\gamma)}^{\sharp}\big(\gamma(s)\big)\tilde h_2(t,\gamma)
%=h_2'(t),\ \ \ \tilde h_2(0,\gamma)=0.
%\end{equation*}

Let $\tilde {\mathbf A}(\gamma)$ be defined by,
\begin{equation*}
\langle \tilde {\mathbf A}(\gamma) h_1, h_2\rangle_{\H}:=
\langle h_1, \tilde h_2(\gamma)\rangle_{\H},\ \  h_1,h_2 \in \H,
\end{equation*}
where $\tilde h_2(\cdot,\gamma)\in \H$ and $\tilde h_2(t,\gamma):=
\int_0^t \Phi_s(\gamma)\frac{d}{ds}\big((\Phi_s(\gamma))^{-1}h_2(s)\big)ds$ is the solution to the following equation
$$ \tilde h_2'(t,\gamma)=\frac{1}{2}
\text{Ric}_{U_{\cdot}(\gamma)}^{\sharp}\big(\gamma(t)\big)h_2(t)
+h_2'(t),\ \ \ \tilde h_2(0,\gamma)=0.$$
Then $\tilde {\mathbf A}(\gamma)$ is a bounded operator
on $\H$ with
$$\|\tilde {\mathbf A}(\gamma)\|^2\le 2\bigg(1+\frac{K(\gamma)^2}{4}\bigg),\
\ \ \gamma \in C_o(M).$$
Furthermore, by (\ref{e15}), it is easy to show $\tilde {\mathbf A}(\gamma)\hat\A(\gamma)
=\mathbf{I}$, which implies that (A3) holds for $\A$ with
$$\vv(R)=\sup_{\gamma \in \mathbf{B}_R}
2\bigg(1+\frac{K(\gamma)^2}{4}\bigg).$$
By Theorem \ref{t1.1} and Theorem \ref{t1.2},
$(\E_{\A}, \F C_{b,loc})$ is closable, and its closure
$(\E_{\A}, \D(\E_{\A}))$ is a Dirichlet form.

We also want to remark that without any restriction on the bound of Ricci curvature of
$M$, $\E_{\A}$ may not be well defined on $\F C_b$ since $\A^{\frac{1}{2}}$ may not be integrable.
Hence the domain $\D(\E_{\A})$ may not be equal to $\D(\E)$, the domain of the O-U Dirichlet form, which is
different from the case in \cite{EL}, \cite{FM}, where the based manifold is compact.

\section{Functional Inequalities}

Through this section, let $\mathbf{\Lambda}(\gamma)$ be the operator $\A(\gamma)$ defined in
Example \ref{ex1}, so $(\E_{\mathbf{\Lambda}},\D(\E_{\mathbf{\Lambda}}))$ is the damped O-U Dirichlet form
on $L^2(\mu)$. We first show that the log-Sobolev inequality still holds for
$(\E_{\mathbf{\Lambda}},\D(\E_{\mathbf{\Lambda}}))$. In particular, if the based manifold is compact, the corresponding result
was shown in \cite[Chapter 4]{EL}.

\begin{thm}\label{t4.1}
The following log-Sobolev inequality holds for
$(\E_{\mathbf{\Lambda}},\D(\E_{\mathbf{\Lambda}}))$,
\begin{equation}\label{e16}
\mu(F^2\log F^2)\le 2 \E_{\mathbf{\Lambda}}(F,F),\ \ \ \ F \in \F C_{b,loc},
\ \mu(F^2)=1.
\end{equation}
In particular, the following weighted log-Sobolev inequality is true,
\begin{equation}\label{e16a}
\mu(F^2\log F^2)\le \int_{C_o(M)}\big(4+K(\gamma)^2\e^{-K_1(\gamma)}\big)
\|D F\|_{\H}^2 \d \mu ,\ \ \  F \in \F C_{b,loc},
\ \mu(F^2)=1,
\end{equation}
where the items $K(\gamma)$, $K_1(\gamma)$ are defined by (\ref{e15aa}).
\end{thm}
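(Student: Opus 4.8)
The plan is to reduce \eqref{e16} to the case of a based manifold with \emph{bounded} Ricci curvature, where the damped log-Sobolev inequality with constant $2$ is already available, and then transfer it back using that every $F\in\F C_{b,loc}$ vanishes off a set where $\rho$ is bounded. Given such an $F$, write $F=G\,l(\rho)$ with $G\in\F C_b$ and $l\in C_0^\infty(\R)$ supported in $[-R_0,R_0]$, so that $F$ (and $DF$) is supported in $\mathbf B_{R_0}$. Fix $R=R_0+1$ and pass to the conformally changed manifold $(M_R,\langle,\rangle_R)$ constructed in the proof of Theorem \ref{t1.1}; by \eqref{e3} it is complete, stochastically complete and has bounded Ricci curvature, so it carries a damped O-U Dirichlet form, which I denote $\E_{\mathbf{\Lambda}}^R$, on $L^2(\mu_R)$. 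Since $\langle,\rangle_R=\langle,\rangle$ on $B_R$ and every minimizing $M$-geodesic from $o$ to a point of $\bar B_{R-1}$ stays inside $\bar B_{R-1}$, one gets $d_R=d_M$ and hence $\rho_R=\rho$ on $\mathbf B_{R-1}$; together with $X_{\cdot}=X_{\cdot,R}$ and $U_{\cdot}=U_{\cdot,R}$ on $\mathbf B_{R-1}$, this shows that $\mu$ and $\mu_R$ assign the same value to any functional supported in $\mathbf B_{R-1}$.

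Next I would verify that the damped form itself is unchanged on $\supp F$. On $\mathbf B_{R_0}\subseteq\mathbf B_{R-1}$ the whole path $\gamma([0,1])$ lies in $B_{R-1}$, where $\Ric$ and $\Ric^{(R)}$ coincide; hence the linear ODE defining $\Phi_{t,s}$, and therefore the operators $\hat\A$ and $\A=\hat\A^{*}\hat\A$ of Example \ref{ex1}, agree with their $M_R$-counterparts there, while $DF=D_RF$ as well (this is exactly the identification already used for \eqref{e4a}). Consequently $\mu(F^2\log F^2)=\mu_R(F^2\log F^2)$, $\mu(F^2)=\mu_R(F^2)=1$ and $\E_{\mathbf{\Lambda}}(F,F)=\int\|\hat\A DF\|_\H^2\,\d\mu=\E_{\mathbf{\Lambda}}^R(F,F)$.

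It then remains to invoke the bounded-curvature inequality on $M_R$, namely $\mu_R(F^2\log F^2)\le 2\E_{\mathbf{\Lambda}}^R(F,F)$; this is the content of \cite{EL}, \cite{FM} in the compact case and extends to bounded Ricci curvature as in \cite{CHL}. For completeness I would reprove it by the martingale-representation method: the Bismut--Driver integration-by-parts formula identifies $\hat\A D_RF$ as the damped gradient whose divergence is the plain Wiener divergence, so $F=\mu_R(F)+\int_0^1\big\langle\EE[(\hat\A D_RF)'(s)\mid\F_s],\d W_s\big\rangle$ and, by the chain rule $\hat\A D_R(F^2)=2F\,\hat\A D_RF$, the integrand of the martingale $N_t:=\EE[F^2\mid\F_t]$ is $n_t=2\,\EE[F(\hat\A D_RF)'(t)\mid\F_t]$. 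Applying It\^o's formula to $N_t\log N_t$ gives $\Ent_{\mu_R}(F^2)=\tfrac12\,\EE\int_0^1\|n_t\|^2/N_t\,\d t$, and Cauchy--Schwarz, in the form $\|n_t\|^2\le 4N_t\,\EE[|(\hat\A D_RF)'(t)|^2\mid\F_t]$, bounds this by $2\,\EE\int_0^1|(\hat\A D_RF)'(t)|^2\,\d t=2\E_{\mathbf{\Lambda}}^R(F,F)$. Combined with the previous paragraph this yields \eqref{e16}, and \eqref{e16a} follows at once from the operator bound \eqref{e15a}: since $\|\hat\A(\gamma)DF\|_\H^2\le 2\big(1+\tfrac14K(\gamma)^2\e^{-K_1(\gamma)}\big)\|DF\|_\H^2$, one gets $2\E_{\mathbf{\Lambda}}(F,F)\le\int_{C_o(M)}\big(4+K(\gamma)^2\e^{-K_1(\gamma)}\big)\|DF\|_\H^2\,\d\mu$.

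The main obstacle is the reduction step rather than the entropy estimate: one must check carefully that $\mu$, $DF$ and the damping operator $\hat\A$ really coincide with their $M_R$-versions throughout $\supp F$ — in particular the equality $\rho=\rho_R$ on $\mathbf B_{R-1}$ and the fact that $F$ restricts to an element of $\F C_{b,loc}(M_R)\subseteq\D(\E_R)$ — so that the bounded-curvature log-Sobolev inequality may legitimately be pulled back to $C_o(M)$. Once this identification is in place, the It\^o and Cauchy--Schwarz computation that produces the sharp constant $2$ is routine and, crucially, curvature-independent, which is why the damped form yields the same constant as flat Wiener space.
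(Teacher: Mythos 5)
Your proposal is correct and follows essentially the same route as the paper: localize $F\in\F C_{b,loc}$ to a conformally changed manifold $M_R$ with bounded Ricci curvature, identify $\mu$, $DF$ and the damping operator with their $M_R$-counterparts on $\supp F$ via the common probability space, apply the curvature-independent Clark--Ocone/martingale log-Sobolev inequality with constant $2$ there, and deduce \eqref{e16a} from \eqref{e15a}. The only difference is that you write out the It\^o--Cauchy--Schwarz entropy computation explicitly where the paper cites \cite[Section 4.2]{EL}.
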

\begin{proof}
For every $R \ge 1$, let $M_R\subseteq M$, $D_R$ be the same items as that in proof of Theorem \ref{t1.1}.
Since $M_R$ has bounded Ricci curvature, by \cite{Hsu3} (see also \cite{CHL}), the integration by parts formula
holds, from which we can deduce that
the Clark-Ocone formula first developed in \cite{F} for the case $M$ compact is still true
on $C_o(M_R)$, see e.g. \cite[Chapter 4]{EL} or \cite{Hsu3}.
Hence following the same procedure as that in \cite[Section 4.2]{EL}, based on the Clark-Ocone
formula, we can show for every $R \ge 1$,
\begin{equation}\label{e17}
\mu_R(F^2\log F^2)\le 2 \int_{C_o(M_R)}
\big\|\mathbf{\Lambda}_R^{\frac{1}{2}}D_R F\big\|_{\H}^2 \d \mu_R ,\ \ \  F \in \F C_{b,loc}(M_R),
 \mu_R(F^2)=1,
\end{equation}
where $\mathbf{\Lambda}_R^{\frac{1}{2}}D_R$ is the damped gradient operator on $C_o(M_R)$, and $\mu_R$ is the Brownian measure
on $C_o(M_R)$. In particular, the constant $2$ of the log-Sobolev inequality (\ref{e17})
is independent of $R$ and the Ricci curvature bound.

Suppose $F \in \F C_{b,loc}$ with $\mu(F^2)=1$, then it has the form $F=\tilde F l(\rho)$ for some
$\tilde F \in \F C_{b}$ and $l \in C_0^{\infty}(\R)$ satisfying $\text{supp}{l}
\subseteq \{r \in \R: |r|\le R_0\}$ for some $R_0\ge 1$. Let $(\E_{R_0},\D(\E_{R_0}))$ be the
O-U Dirichlet form on $C_o(M_{R_0})$, as the same argument in
the proof of Theorem \ref{t1.1}, we know $\hat F:=F \big |_{C_o(M_{R_0})}\in \D(\E_{R_0})$ and
$$\text{supp}F = \text{supp}\hat F\subseteq \supp\{\gamma \in C_o(M_{R_0})\subseteq C_o(M):
\ \rho(\gamma)=\rho_{R_0}(\gamma)\le R_0\},$$
so $\mu_{R_0}(\hat F^2\log \hat F^2)=\mu( F^2\log  F^2)$,
$\mu_{R_0}(\hat F^2)=\mu(F^2)=1$.
As explained in the proof of Theorem \ref{t1.1}, by modeling $C_o(M)$ and
$C_o(M_{R_0})$ into the same probability space $(\Omega, \P)$, we know
$U_{\cdot,R_0}=U_{\cdot}$ $\P$ -$a.s.$ when $\hat F(\pi(U_{\cdot}))\neq 0$,
where $U_{\cdot,R_0}$ and $U_{\cdot}$ are the horizontal lift
(along the Brownian motion) on
$M_{R_0}$ and $M$ respectively, so
$\int_{C_o(M)}
\|\mathbf{\Lambda}^{\frac{1}{2}}D F\|_{\H}^2 \d \mu $
$=\int_{C_o(M_{R_0})}
\|\mathbf{\Lambda}_{R_0}^{\frac{1}{2}}D_{R_0} \hat F\|_{\H}^2 \d \mu_{R_0} $.
%$D_{R_0}\hat F(\gamma)=DF(\gamma)$ and
%$\A(\gamma)^{\frac{1}{2}}=\A_{R_0}(\gamma)^{\frac{1}{2}}$
%for $a.s.$ $\gamma \in \text{supp} \hat F$.
%hence
%$\A_{R_0}(\gamma)\big(D_R \hat F(\gamma)\big)$ $=\A(\gamma)\big(D F(\gamma)\big)$
%for a.s. every $\gamma \in \text{supp} \hat F$.
Then applying (\ref{e17})
to $\hat F$, we have,
\begin{equation*}
\mu(F^2\log F^2)\le 2 \int_{C_o(M)}
\|\mathbf{\Lambda}^{\frac{1}{2}}D F\|_{\H}^2 \d \mu ,
\end{equation*}
note that $F \in \F C_{b,loc}$ is arbitrary, we have shown (\ref{e16}).
And by the estimate (\ref{e15a}), we can get (\ref{e16a})
immediately from (\ref{e16}).
\end{proof}

As \cite{AE}, \cite{CHL}, \cite{Hsu2}, if the based manifold is compact
or with bounded Ricci curvature,  the log-Sobolev inequality for
the O-U Dirichlet form holds, but the corresponding constant depends on the uniform bound of
the Ricci curvature. Hence the log-Sobolev inequality may not be true if the Ricci curvature of the
based manifold is unbounded, and we will study the weak log-Sobolev inequality
introduced in \cite{CGG}, which can be used to describe the convergence rate for the entropy of
the associated Markov semigroup.
%Our method here is inspired by that of \cite{W},
%where the weak Poincar\'e inequality was studied.

Based on the weighted log-Sobolev inequality (\ref{e16a}), following the techniques in
\cite[Lemma 2.3]{CLW} and \cite[Theorem 1]{W}, we obtain the weak log-Sobolev inequality for the O-U Dirichlet form.

\begin{thm}\label{t4.2}
If
\begin{equation}\label{e17aa}
\lim_{R \rightarrow \infty}
\frac{1}{\sqrt{\mu(\rho > R)}}\int^\infty_{R}\frac{\d s}
{\sqrt{4+\e^{-\tilde K_1(s)}\tilde K(s)^2}}=\infty,
\end{equation}
where $\tilde K_1$ and $\tilde K$ are defined by (\ref{e15c}),
then the following weak log-Sooblev inequality holds,
\begin{equation}\label{e17a}
\mu(F^2 \log F^2)\le \alpha(r) \E(F,F)+r\|F\|_{\infty}^2,\ \
F \in \F C_b \ \ \ \mu(F^2)=1,\ \ 0<r\le r_0,
\end{equation}
for some $r_0>0$ with
\begin{equation*}
\begin{split}
\alpha(r):=\inf_{R \in \Lambda_r}\big\{2\big(4+\tilde K(R)^2\e^{-\tilde K_1(R)}\big)\big\}<\infty,
\ \ r>0,
\end{split}
\end{equation*}
where
\begin{equation}\label{e18a}
\Lambda_r:=\bigg\{R>0:\ \inf_{R_1 \in (0,R)}\bigg\{\frac{2\mu(\rho > R_1)}
{\big(\int^{R}_{R_1}\frac{\d s}
{\sqrt{4+\e^{-\tilde K_1(s)}\tilde K(s)^2}}\big)^2}+
3\sqrt{\mu(\rho>R_1)}\bigg\}\le r\bigg\}.
\end{equation}
\end{thm}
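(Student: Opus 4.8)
The plan is to deduce the weak inequality from the weighted log-Sobolev inequality (\ref{e16a}) by a radial cut-off, after replacing the pathwise curvature weight by its radial envelope. Set $W(s):=4+\tilde K(s)^2\e^{-\tilde K_1(s)}$. Since $\tilde K$ is nondecreasing and $\tilde K_1$ nonincreasing, $W$ is nondecreasing; moreover for $\mu$-a.s. $\gamma$ every $\gamma(t)$ lies in $\{d_M(o,\cdot)\le\rho(\gamma)\}$, so $K(\gamma)\le\tilde K(\rho(\gamma))$, $K_1(\gamma)\ge\tilde K_1(\rho(\gamma))$, hence $4+K(\gamma)^2\e^{-K_1(\gamma)}\le W(\rho(\gamma))$. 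Writing $\Ent(G^2):=\mu(G^2\log G^2)-\mu(G^2)\log\mu(G^2)$ and using that (\ref{e16a}) holds by homogeneity as $\Ent(G^2)\le\int_{C_o(M)}(4+K^2\e^{-K_1})\|DG\|_\H^2\d\mu$ for every $G\in\F C_{b,loc}$, the whole statement reduces to proving, for each pair $0<R_1<R$ and each $F\in\F C_b$ with $\mu(F^2)=1$,
\begin{equation*}
\mu(F^2\log F^2)\le 2W(R)\E(F,F)+\Big[\tfrac{2\mu(\rho>R_1)}{(\int_{R_1}^R W(s)^{-1/2}\d s)^2}+3\sqrt{\mu(\rho>R_1)}\Big]\|F\|_\infty^2,
\end{equation*}
and then optimising over $R_1\in(0,R)$ and $R\in\Lambda_r$.

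To produce the first two terms, fix $0<R_1<R$, set $c:=(\int_{R_1}^R W(s)^{-1/2}\d s)^{-1}$, and choose a Lipschitz $\Theta:[0,\infty)\to[0,1]$ with $\Theta=1$ on $[0,R_1]$, $\Theta=0$ on $[R,\infty)$ and $|\Theta'(s)|=cW(s)^{-1/2}$ a.e.\ on $(R_1,R)$, so that $\int_{R_1}^R|\Theta'|=1$. Approximating $\Theta$ by smooth functions, the pointwise version of Lemma \ref{l2.1} yields $\chi:=\Theta(\rho)\in\D(\E)$ with $\|D\chi(\gamma)\|_\H\le|\Theta'(\rho(\gamma))|$ a.s., hence $G:=F\chi\in\F C_{b,loc}$. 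Applying (\ref{e16a}) to $G$, together with $D(F\chi)=\chi DF+FD\chi$ and $\|D(F\chi)\|_\H^2\le2\chi^2\|DF\|_\H^2+2F^2\|D\chi\|_\H^2$, gives
\begin{equation*}
\Ent(G^2)\le 2\int_{C_o(M)}W(\rho)\chi^2\|DF\|_\H^2\d\mu+2\int_{C_o(M)}W(\rho)F^2|\Theta'(\rho)|^2\d\mu.
\end{equation*}
On $\{\chi>0\}\subseteq\{\rho\le R\}$ monotonicity gives $W(\rho)\chi^2\le W(R)$, so the first term is at most $2W(R)\E(F,F)$; on $\{\Theta'(\rho)\neq0\}\subseteq\{\rho>R_1\}$ one has $W(\rho)|\Theta'(\rho)|^2=c^2$, so the second is at most $2c^2\|F\|_\infty^2\mu(\rho>R_1)$, which is exactly the first bracketed term.

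The remaining, and principal, step is the entropy comparison $\Ent(F^2)\le\Ent(G^2)+3\sqrt{\mu(\rho>R_1)}\|F\|_\infty^2$; the difficulty is to achieve this \emph{without} the spurious $\log\|F\|_\infty$ factor that a naive convexity estimate produces. Since $\mu(F^2)=1$ and $\mu(G^2)\le1$, convexity of $x\mapsto x\log x$ gives $\Ent(F^2)-\Ent(G^2)\le\mu(F^2\log F^2-G^2\log G^2)$, and with $G^2=F^2\chi^2$ the integrand equals $F^2(1-\chi^2)\log F^2-F^2\chi^2\log\chi^2$, supported in $\{\rho>R_1\}$. The contribution $-F^2\chi^2\log\chi^2\le\e^{-1}F^2$ integrates to at most $\e^{-1}\|F\|_\infty^2\mu(\rho>R_1)\le\e^{-1}\sqrt{\mu(\rho>R_1)}\|F\|_\infty^2$. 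For the first contribution I would use the elementary bound $\log^+F^2\le2\e^{-1}|F|$ (from $\log t\le2\e^{-1}\sqrt t$) to obtain $F^2(1-\chi^2)\log F^2\le2\e^{-1}|F|^3\mathbf 1_{\{\rho>R_1\}}$, and then Cauchy--Schwarz with $\mu(F^4)\le\|F\|_\infty^2\mu(F^2)=\|F\|_\infty^2$ to get $\int_{\{\rho>R_1\}}|F|^3\d\mu\le\|F\|_\infty\sqrt{\mu(\rho>R_1)}\sqrt{\mu(F^4)}\le\sqrt{\mu(\rho>R_1)}\|F\|_\infty^2$. Summing the two contributions bounds them by $3\e^{-1}\sqrt{\mu(\rho>R_1)}\|F\|_\infty^2\le3\sqrt{\mu(\rho>R_1)}\|F\|_\infty^2$, as required.

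Combining the three displays establishes the reduced inequality for all $0<R_1<R$. Taking the infimum over $R_1\in(0,R)$ turns the bracket into the quantity in (\ref{e18a}), so whenever $R\in\Lambda_r$ the bracket can be made $\le r$, and then the infimum over $R\in\Lambda_r$ replaces $2W(R)$ by $\alpha(r)$, yielding (\ref{e17a}). Finally, condition (\ref{e17aa}) is precisely what guarantees $\Lambda_r\neq\emptyset$, hence $\alpha(r)<\infty$, for all small $r$: given $r$, one first picks $R_1$ so large that $3\sqrt{\mu(\rho>R_1)}\le r/2$ (possible since $\rho<\infty$ $\mu$-a.s.), and (\ref{e17aa}) then furnishes $R>R_1$ with $\int_{R_1}^R W^{-1/2}\ge2r^{-1/2}\sqrt{\mu(\rho>R_1)}$, making the first bracketed term $\le r/2$ as well. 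This produces the threshold $r_0>0$ and completes the argument, the main obstacle being the $\log$-free entropy comparison of the third paragraph.
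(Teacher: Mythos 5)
Your proposal is correct and follows essentially the same route as the paper: the same cut-off $g_R(\rho)$ built from $\int_{R_1}^{\cdot}\big(4+\tilde K^2\e^{-\tilde K_1}\big)^{-1/2}\d s$, the same application of the weighted log-Sobolev inequality (\ref{e16a}) to $F g_R(\rho)$, and the same three-term entropy decomposition with the error controlled by $3\sqrt{\mu(\rho>R_1)}\,\|F\|_\infty^2$. The only cosmetic difference is that you bound the $I_1$-type term via $\log t\le 2\e^{-1}\sqrt t$ where the paper uses $(\log x)^+\le x$; both yield the same conclusion.
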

\begin{proof}

%$$\tau_R:=\inf\{t\geq0: \rho(\gamma_t)\geq R\}.$$
 For a fixed $R_1>0$, let
 \begin{equation*}
\theta(r)=\frac{1}{\sqrt{\mu(\rho > R_1)}}\int^{R_1\vee r}_{R_1}\frac{\d s}
{\sqrt{4+\e^{-\tilde K_1(s)}\tilde K(s)^2}},\ \ r\in \R.
\end{equation*}
For every (fixed) $R>R_1$, let
\begin{equation*}
g_R(r):=\Big(1-\frac{\theta(r)}{\theta(R)}\Big)^+, \ \ r \in \R,
\end{equation*}
then $g_R$ is a bounded Lipschitz continuous function on $\R$
with compact support, moreover $0\le g_R \le 1$, $g_R(r)=1$ if $r\leq R_1$, and $g_R(r)=0$ if $r\geq R$.
So by lemma \ref{l2.1} and the approximation argument, we have $g_R(\rho)\in \D(\E)$ and
\begin{equation}\label{ee}\|D g_R(\rho)\|_{\H}^2
\le \frac{1}{(4+\e^{-\tilde K_1(\rho)}\tilde K(\rho)^2)\mu(\rho>R_1) \theta(R)^2}1_{\{\rho>R_1\}}.\end{equation}

It is sufficient to prove
(\ref{e17a}) for every $F \in \F C_{b,loc}$. For every $F \in \F C_{b,loc}$ with $\mu(F^2)=1$,
let $F_R:=g_R(\rho)F$, we have,
\begin{equation*}
\begin{split}
&\Ent(F^2)=\mu(F^2\log F^2)=
\big(\mu(F^2\log F^2)-\Ent(F_R^2)\big)+\Ent(F_R^2)\\
&=\mu\big(F^2(1-g_R^2(\rho))\log F^2\big)+
\mu\Big(F^2g_R^2(\rho)\big(-\log g_R^2(\rho)+\log \mu(F_R^2)\big)\Big)+\Ent(F_R^2)\\
&:=I_1+I_2+I_3,
\end{split}
\end{equation*}
where $\Ent(G^2):=\mu(G^2 \log G^2)-\mu(G^2)\log \mu(G^2)$ for every measurable function $G$
on $C_o(M)$.

According to the inequality $(\log x)^{+} \leq x, x>0$,
\begin{equation*}
\aligned I_1&=\int_{\{\rho>R_1\}}(1-g_R^2(\rho))F^2 \log F^2 \d \mu\\
&\leq2\|F\|_\infty^2\int_{\{\rho>R_1\}} (\log |F|)^+ \d\mu
\leq2\|F\|_\infty^2\int_{\{\rho>R_1\}} |F| \d \mu\\
&\leq2\|F\|_\infty^2 \sqrt{\mu(\rho>R_1)},\endaligned
\end{equation*}
where in the last step above, we use Cauchy-Schwarz inequality and the assumption
$\mu(F^2)=1$.

Since
$x\log x\geq -\e^{-1}$ for any $0< x\leq1$, and $\log \mu(F_R^2)\le 0$
due to $\mu(F_R^2)\le \mu(F^2) \le 1$, we have,
\begin{equation*}
I_2\leq \e^{-1}\int_{\{\rho>R_1\}} F^2 \d \mu\leq \|F\|_\infty^2\mu(\rho>R_1).
\end{equation*}

Since $\text{supp}F_R \subseteq \mathbf{B}_R$,
we apply (\ref{e16a}) to $F_R$ and obtain,
\begin{equation*}
\begin{split}
& I_3 \le
\int \big(4+K(\gamma)^2 \e^{-\tilde K_1(\gamma)}\big)\|D F_R(\gamma)\|_{\H}^2 \d \mu\\
&\le 2\int \big(4+\tilde K(\rho)^2 \e^{-\tilde K_1(\rho)}\big)\big(
g_R^2(\rho)\|D F\|_{\H}^2+ \|D g_R(\rho)\|_{\H}^2 F^2\big)\d \mu\\
&\le 2 \big(4+\tilde K(R)^2 \e^{-\tilde K_1(R)}\big)\E(F,F)
+\frac{2\|F\|_{\infty}^2}{\theta(R)^2},
\end{split}
\end{equation*}
where in the last step, we use (\ref{ee}).

Combining with the above inequalities, we obtain
$$\aligned\mu(F^2\log F^2)&\leq 2\big(4+\tilde K(R)^2 \e^{-\tilde K_1(R)}\big)\E(F,F)\\
&+\Big(\frac{2}{\theta(R)^2}+3\sqrt{\mu(\rho>R_1)}\Big)\|F\|_\infty^2,
\endaligned$$
from which we can get (\ref{e17a}) immediately. In particular, (\ref{e17aa}) ensures the set
$\Lambda_r$ defined by (\ref{e18a}) is not empty when $r$ is small enough.
\end{proof}

Given $\tilde K$ and $\tilde K_1$, i.e. the growth rate of the Ricci curvature of the based manifold, we obtain a concrete estimate
for the rate function $\alpha$ in the weak log-Sobolev inequality (\ref{e17a}). In particular, by the equivalence
between a class of weak log-Sobolev inequalities and super Poincar\'e inequalities established in \cite{CGG},
under some suitable condition of $\tilde K$ and $\tilde K_1$, we can show that
the super Poincar\'e inequality or the
Poincar\'e inequality holds for the O-U Dirichlet form. For the overall introduction of the super
Poincar\'e inequality, we refer the reader to \cite{W00a} and \cite[Chapter 3]{Wbook}.

\begin{cor}\label{cor}
Suppose
\begin{equation}\label{e18}
\tilde K(s) \le c_1(1+s^{\delta_1}),\ \ \tilde K_1(s)\ge -c_2-\delta_2 \log(1+s),\ \ \ s>0,
\end{equation}
for some non-negative constants $c_1,c_2,\delta_1,\delta_2$.

(1) If $2\delta_1+\delta_2<2$, then the following
super Poincar\'e inequality holds,
\begin{equation}\label{e19a}
\mu(F^2)\le r \E(F,F)+\beta(r)\mu(|F|)^2,\ \ \ F \in \D(\E),\ r>0,
\end{equation}
where $\beta(r)=\exp\Big(c_3\Big(1+r^{-\frac{2}{2-2\delta_1-\delta_2}}\Big)\Big)$ for some
$c_3>0$.

(2) If $2\delta_1+\delta_2\le 2$, then the following  Poincar\'e inequality
\begin{equation}\label{e20}
\mu(F^2)\le c_4\E(F,F)+\mu(F)^2,\ \ \ F \in \D(\E),
\end{equation}
holds for some $c_4>0$.
\end{cor}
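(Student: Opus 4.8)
The plan is to deduce Corollary~\ref{cor} from the weak log-Sobolev inequality of Theorem~\ref{t4.2} by making the curvature growth hypotheses (\ref{e18}) explicit and invoking the general equivalence between weak log-Sobolev and super Poincar\'e inequalities established in \cite{CGG}. The first task is to control the weight appearing in Theorem~\ref{t4.2}, namely $4+\e^{-\tilde K_1(s)}\tilde K(s)^2$. Under (\ref{e18}) we have $\tilde K(s)^2 \le c_1^2(1+s^{\delta_1})^2$ and $\e^{-\tilde K_1(s)} \le \e^{c_2}(1+s)^{\delta_2}$, so the weight is bounded above by a polynomial-type expression of order $2\delta_1+\delta_2$ in $s$. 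This is the point where the condition $2\delta_1+\delta_2\le 2$ enters: it guarantees that the integrand $\frac{1}{\sqrt{4+\e^{-\tilde K_1(s)}\tilde K(s)^2}}$ decays no faster than $s^{-(\delta_1+\delta_2/2)}$ with exponent $\le 1$, so that $\int_R^\infty$ of this quantity diverges fast enough (or the partial integrals grow fast enough) to keep $\Lambda_r$ nonempty and to yield a usable growth rate for $\alpha(r)$.

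**Estimating the rate function**

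Next I would estimate $\alpha(r)$ quantitatively. Choosing $R_1$ (say $R_1=R/2$ or a fixed small value) and estimating $\int_{R_1}^R \frac{\d s}{\sqrt{4+\e^{-\tilde K_1(s)}\tilde K(s)^2}} \gtrsim R^{1-\delta_1-\delta_2/2}$ from below, together with the Gaussian-type tail bound $\mu(\rho>R_1)\le C\e^{-cR_1^2}$ (which holds because $\rho$ has sub-Gaussian tails under the Brownian measure—this is a standard concentration fact for path space), one finds that the constraint defining $\Lambda_r$ in (\ref{e18a}) is satisfied once $R$ is comparable to a suitable power of $\log(1/r)$. Feeding this back into $\alpha(r)=\inf_{R\in\Lambda_r}2(4+\tilde K(R)^2\e^{-\tilde K_1(R)})$ gives $\alpha(r)\lesssim R^{2\delta_1+\delta_2}$, which translates into $\alpha(r)$ growing only like a power of $\log(1/r)$. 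When $2\delta_1+\delta_2<2$ the divergence in (\ref{e17aa}) is strict and one obtains a genuine rate; when $2\delta_1+\delta_2=2$ the borderline case still yields a bounded-enough $\alpha$ for the Poincar\'e conclusion.

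**Invoking the equivalence and the borderline case**

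The remaining step is to apply the equivalence theorem of \cite{CGG}: a weak log-Sobolev inequality with rate $\alpha(r)$ of the above logarithmic type is equivalent to a super Poincar\'e inequality whose $\beta$ has the stated form $\exp(c_3(1+r^{-2/(2-2\delta_1-\delta_2)}))$, giving part~(1). For part~(2), a super Poincar\'e inequality with $\beta(r)<\infty$ for all $r>0$ automatically implies a Poincar\'e inequality (this is a standard consequence, cf. \cite{Wbook}); in the borderline case $2\delta_1+\delta_2=2$ one instead argues directly that $\alpha(r)$ stays bounded as $r\to 0$ along an admissible sequence, so the weak log-Sobolev inequality degenerates into an ordinary log-Sobolev inequality, which in turn implies the Poincar\'e inequality (\ref{e20}) with some constant $c_4>0$.

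**Main obstacle**

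I expect the main obstacle to be the explicit lower bound on the integral $\int_{R_1}^R \frac{\d s}{\sqrt{4+\e^{-\tilde K_1(s)}\tilde K(s)^2}}$ combined with the correct calibration of $R_1=R_1(r)$ and $R=R(r)$ so that the two competing terms in (\ref{e18a})—the quadratic-over-integral term and the tail term $3\sqrt{\mu(\rho>R_1)}$—are simultaneously controlled by $r$. The delicate point is that the Gaussian tail of $\rho$ and the polynomial growth of the weight must be balanced so that $\Lambda_r$ is nonempty precisely in the regime $2\delta_1+\delta_2\le 2$; getting the borderline exponent $2\delta_1+\delta_2=2$ to still yield a finite Poincar\'e constant (rather than only a super Poincar\'e inequality with blowing-up $\beta$) will require the sharpest form of these estimates.
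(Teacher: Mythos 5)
Your overall strategy for the main estimates coincides with the paper's: bound the weight $4+\e^{-\tilde K_1(s)}\tilde K(s)^2$ by a polynomial of order $2\delta_1+\delta_2$ using (\ref{e18}), combine this with the Gaussian tail of $\rho$, take $R_1=R/2$ and $R$ of order $\sqrt{|\log r|}$ to verify (\ref{e17aa}) and to show $\Lambda_r\neq\emptyset$, deduce $\alpha(r)\le C|\log r|^{(2\delta_1+\delta_2)/2}$, and invoke the weak log-Sobolev/super Poincar\'e correspondence of \cite{CGG} for part (1). One secondary imprecision: the tail bound $\mu(\rho>R_1)\le C_1\e^{-C_2R_1^2}$ is not a curvature-free ``standard concentration fact''; on a general complete, stochastically complete manifold $\rho$ need not be sub-Gaussian. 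It is exactly the hypothesis (\ref{e18}) (in particular the logarithmic lower bound on $\tilde K_1$) that delivers it, via \cite[Lemma 2.2]{W}, and this should be cited rather than asserted.

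The genuine gap is in your treatment of the borderline case $2\delta_1+\delta_2=2$ of part (2). You claim that there ``$\alpha(r)$ stays bounded as $r\to 0$ along an admissible sequence,'' so that the weak log-Sobolev inequality degenerates into an ordinary log-Sobolev inequality and hence Poincar\'e. But the estimate your own construction produces is $\alpha(r)\le C|\log r|^{(2\delta_1+\delta_2)/2}=C|\log r|$, which diverges as $r\to 0$, and nothing in the hypotheses forces $\alpha$ to be bounded (when the Ricci curvature is genuinely unbounded one expects the admissible $R$ to grow like $\sqrt{|\log r|}$ and $\alpha(r)$ to grow like $|\log r|$). Your fallback route --- super Poincar\'e implies Poincar\'e --- only covers the strict case $2\delta_1+\delta_2<2$, since at the borderline the exponent $2/(2-2\delta_1-\delta_2)$ in $\beta$ blows up and part (1) gives nothing. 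The missing ingredient, which is what the paper uses, is the threshold result \cite[Proposition 3.1]{CGG} (see also \cite[Lemma 2.4]{CLW}): a weak log-Sobolev inequality whose rate function satisfies $\alpha(r)=O(|\log r|)$ as $r\to 0$ already implies the Poincar\'e inequality. This is a nontrivial statement proved by a dedicated argument, not a degeneration to a log-Sobolev inequality, and without it your proof of (\ref{e20}) in the case $2\delta_1+\delta_2=2$ does not close.
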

\begin{proof}
Under the curvature condition (\ref{e18}), by \cite[Lemma 2.2]{W}, see also
\cite[Page 1091 (2.6)]{WW1}, we know,
\begin{equation*}
\mu(\rho>R_1)\le C_1 \e^{-C_2 R_1^2},\ \ \ R_1>0,
\end{equation*}
for some positive constants $C_1,C_2$. By (\ref{e18}),
there exist positive constants $C_3, C_4$ such that for every $R_1,R$ large enough with $R>> R_1$,
\begin{equation*}
\frac{1}{\sqrt{\mu(\rho > R_1)}}\int^{R}_{R_1}\frac{\d s}
{\sqrt{4+\e^{-\tilde K_1(s)}\tilde K(s)^2}}\ge C_3\e^{C_4 R_1^2},
\end{equation*}
so condition (\ref{e17aa}) is true. And if we take
$R_1=\frac{R}{2}$ in (\ref{e18a}), then there exist some positive constants
$C_5,C_6$, such that for any $r>0$ small enough,
$\tilde R_0:=C_5+C_6\sqrt{|\log r|}\in \Lambda_r $. So by Theorem \ref{t4.2},
(\ref{e17a}) is true with the rate function
\begin{equation}\label{e19}
\alpha(r)=C_7 |\log r|^{\frac{2\delta_1+\delta_2}{2}},\ \ r>0,
\end{equation}
for some constant $C_7>0$.

If $2\delta_1+\delta_2<2$, by \cite[Proposition 3.4]{CGG}, the weak log-Sobolev inequality with rate function
(\ref{e19}) implies the super Poincar\'e inequality (\ref{e19a}). We want to remark that although
their result is presented for the inequality on Euclidean space, by carefully tracking the proof,
such result is still  true for our case.

If  $2\delta_1+\delta_2\le 2$, by  \cite[Proposition 3.1]{CGG} (see also
\cite[Lemma 2.4]{CLW}), we obtain the Poincar\'e inequality
(\ref{e20}).
\end{proof}

\section{Quasi-regularity of the Dirichlet form}

In this section, we will study the quasi-regularity of the Dirichlet form
$(\E_{\A}, \D(\E_{\A}))$.
Let $L^{\infty}(C_o(M)\rightarrow \R^n;\mu)$ be the set of measurable vectors
$v: C_o(M) \rightarrow \R^n$ such that $||v||_{L^{\infty}}<\infty$ and let
$L^{\infty}_{loc}(C_o(M)\rightarrow \R^n;\mu)$ be the collection of
measurable vectors $v: C_o(M) \rightarrow \R^n$ such that
$v1_{\mathbf{B}_R} \in L^{\infty}(C_o(M)\rightarrow \R^n;\mu)$ for every
$R \ge 1$, where $\mathbf{B}_R$ is defined by (\ref{e6aa}). For every $t \in [0,1]$,
$v \in L^{\infty}_{loc}(C_o(M)\rightarrow \R^n;\mu)$, we define
$\Psi_{t,v}(\cdot,\gamma) \in \H$ as following
\begin{equation*}
\Psi_{t,v}(s,\gamma):= \big(s \wedge t\big)v(\gamma),\ \ \ s \in [0,1],\ \gamma \in C_o(M).
\end{equation*}
Let $l_R \in C_0^{\infty}(\R)$ be the function constructed by (\ref{e0a}), for every $R \ge 1$ we define
\begin{equation}\label{e0aa}
\phi_R(\gamma):=l_R(\rho(\gamma)),\ \ \gamma \in C_o(M).
\end{equation}
In order to prove the quasi-regularity of $(\E_{\A}, \D(\E_{\A}))$, we need to impose a condition which is stronger than
(A2). We assume
\begin{enumerate}
\item[(A2')]
For every $v \in L^{\infty}_{loc}(C_o(M)\rightarrow \R^n;\mu)$,
$\Psi_{t,v}(\cdot,\gamma) \in \D(\A(\gamma)^{\frac{1}{2}})$ $\mu$- $a.s.$.  For
every $R \ge 1$,
$D\phi_R(\gamma) \in \D(\A(\gamma)^{\frac{1}{2}})$ $\mu$-$a.s.$, and
there exists a
constant $0<c_1(R)<\infty$, such that,
\begin{equation}\label{e1a}
\begin{split}
&\int_{\mathbf{B}_R}\|\A^{\frac{1}{2}}\Psi_{t,v}\|_{\H}^2\d \mu\le
c_1(R)||v1_{\mathbf{B}_R}||^2_{L^{\infty}},
%&\int_{\{\rho \le k\}}\big\|\A^{\frac{1}{2}}(Dl_R(\rho))\big\|_{\H}^2\d \mu\le C_1(k),
\end{split}
\end{equation}
\begin{equation}\label{e1aa}
\begin{split}
&\int_{\mathbf{B}_R}\big\|\A^{\frac{1}{2}}(D\phi_R)\big\|_{\H}^2\d \mu\le c_1(R),
\end{split}
\end{equation}
for every $v \in L^{\infty}_{loc}(C_o(M)\rightarrow \R^n;\mu)$ and $t \in (0,1]$.
\end{enumerate}
The assumption (A2') is a local version of assumption of (A1) in
\cite[Page 1087]{WW1}, and it is obvious that assumption (A2') implies (A2).

Now we will prove Theorem \ref{t1.2}, the proof is inspired by the argument
developed in \cite{DR} for path space over a compact manifold.

%\begin{thm}\label{t3.1}
%Assume (A1), (A2') and (A3), then $(\E_{\A}, \D(\E_{\A}))$ is a quasi-regular
%Dirichlet form.
%\end{thm}
\begin{proof}[Proof of Theorem $\ref{t1.2}$]
 (a) We need to check (i)-(iii) in \cite[Definition IV-3.1]{MR}.
Since every $F \in \F C_{b,loc}$ is a continuous function on $C_o(M)$, and
$\F C_{b,loc}$ is dense on $\D(\E_{\A})$ under $\E_{\A,1}$ norm, so (ii) of
\cite[Definition IV-3.1]{MR} is satisfied.

Let $\{t_i\}_{i=1}^\infty$ be a countable dense subset of
$(0,1]$, $\{\phi_R\}_{R=1}^{\infty}$ be the function defined by (\ref{e0aa}), we may choose a countable dense subset $\{\eta_m\}_{m=1}^{\infty}$
$\subseteq C_0^{\infty}(M)$ which separates the points of $M$. Define
\begin{equation*}
\begin{split}
&\mathbf{S}:=\Big\{F_{R,m,i}|F_{R,m,i}(\gamma)=\phi_R(\gamma)\eta_m\big(\gamma(t_i)\big),\ \ R,i,m \in \mathbb{N}_+, \gamma\in C_o(M)\Big\},
\end{split}
\end{equation*}
it is obvious that $\mathbf{S}\subseteq \D(\E_{\A})$ is a countable subset and every $F \in \mathbf{S}$
is a continuous function. For any $\gamma, \sigma \in C_o(M)$ with $\gamma\neq\sigma$, there exists a $t_{i_0}$ such that $\gamma(t_{i_0}) \neq \sigma(t_{i_0})$, and
we can choose a $\eta_{m_0} \in C_0^{\infty}(M)$ with
$\eta_{m_0}(\gamma(t_{i_0})) \neq \eta_{m_0}(\sigma(t_{i_0}))$ since $\{\eta_m\}$ separates the point of $M$.
We can also find a
$R_0 \ge \max\{\rho(\gamma),\rho(\sigma)\}+1$,  hence $\phi_{R_0}(\gamma)=\phi_{R_0}(\sigma)=1$
by definition.
Let $\tilde F(\gamma):=\phi_{R_0}(\gamma)\eta_{m_0}\big(\gamma_{t_0}\big)$, then $\tilde F\in \mathbf{S}$ and
$\tilde F(\gamma) \neq \tilde F(\sigma)$, which implies $\mathbf{S}$ separates the point in $C_o(M)$, and
(iii) of \cite[Definition IV-3.1]{MR} is true.

In the following, it suffices to check (i) of \cite[Definition IV-3.1]{MR}, i.e. (see \cite[Remark IV-3.2]{MR})
to find out a sequence of compact sets $\{\mathbf{K}_k\}_{k=1}^{\infty}\subset C_o(M)$ such
that
\begin{equation}\label{e9}
\displaystyle\lim_{k\to\infty}\Cap(C_o(M)\backslash \mathbf{K}_k)=0,
\end{equation}
where $\Cap$ is the capacity induced by $(\E_{\A},\D(\E_{\A}))$, see e.g. \cite[Page 606]{DR}.

(b) To construct $\mathbf{K}_k$, we use the method developed in \cite{DR}, the main difference of our situation here
from that in \cite{DR} is that we can only take a local test function
$G_{j,R}$ (defined below) due to the lack of uniformly control of $\E_{\A}$ without any curvature condition.
For the reader's convenience, here we write all the procedure explicitly.
 By Nash embedding theorem,
there exists a $\varphi:M \rightarrow \R^N$ (for some $N \in \mathbb{N}_+$) such that
\begin{equation*}
M\ni x\mapsto\varphi(x):=\big(\varphi_1(x),\cdots,\varphi_N(x)\big)\in \mathbb{R}^N
\end{equation*}
is a smooth isometric embedding, i.e. $M$ is isometric to $\varphi(M)$ endowed with
the induced metric of $\mathbb{R}^N$. As introduced,
the distance $d$ defined by $d(\gamma,\sigma):=\sup_{t\in[0,1]}d_M(\gamma(t),\sigma(t))$,
$\gamma,\sigma \in C_o(M)$
is compatible with the topology on $C_o(M)$.
Let $d_0(x,y):=|\varphi(x)-\varphi(y)|$, $ x,y\in M$ and
\begin{equation}\label{e8aa}
\bar{d}(\gamma,\sigma):=\displaystyle\sup_{t\in [0,1]}d_0(\gamma(t),\sigma(t)), \ \ \
\gamma,\sigma \in C_o(M).
\end{equation}
Repeating the same procedure as that in step (1) of the proof
of \cite[Page 1092, Theorem 1.1]{WW1}, we know $\bar d$ induces the same topology on
$C_o(M)$ as $d$ does.

(c) From now on, we take $R$ to be a positive integer. Let $\{t_j\}_{j=1}^\infty$ be a countable dense subset of
$(0,1]$, let $\{\phi_R\}_{R=1}^{\infty}$ be defined by (\ref{e0aa}).
For every fixed $R,j\ge 1$ and $\sigma \in C_o(M)$, we define,
\begin{equation*}
G^j_{\sigma,R}(\gamma):=\displaystyle
\phi_R(\gamma)\big(\sup_{1\leq i\leq N}\displaystyle|
\varphi_i(\gamma(t_j))-\varphi_i(\sigma(t_j))|\wedge 1\big),
\quad \ \  \gamma \in C_o(M).
\end{equation*}
Notice that $\phi_R(\gamma)\neq 0$ only if $\gamma\in \mathbf{B}_R$, so the above definition will not be changed
if we replace $\varphi_i(\gamma(t_j))$ by $\tilde \varphi_i(\gamma(t_j))$, where
$\tilde \varphi_i \in C_0^{\infty}(M)$ with $\tilde \varphi_i(x)=\varphi_i(x)$
for each $x \in B_R$. Hence
$G^j_{\sigma,R} \in \F C_{b,loc}\subseteq \D(\E_{\A})$. From
the assumption (A2'), we have,
%and the property $\|D \phi_R\|_{\H}\le 2 \cdot 1_{\mathbf{B}_R}$, we have
\begin{equation}\label{e9a}
\begin{split}
\E_{\A}(G^j_{\sigma,R},G^j_{\sigma,R}) \leq 2c_1(R)(1+c_2(R)), \ \ \ R,j\ge 1, \sigma \in C_o(M),
\end{split}
\end{equation}
where $c_1(R)$ is the constant in assumption (A2'), and $c_2(R):=\max_{1\le i \le N}
\sup_{x \in B_R}$ $|\nabla \varphi_i(x)|^2$.
For every fixed $R \ge 1$ and $\sigma \in C_o(M)$, we define
\begin{equation*}
G_{\sigma,R}(\gamma):=\phi_R(\gamma)\big(\displaystyle\sup_{1\leq i\leq N}\displaystyle\sup_{t\in[0,1]}
|\varphi_i(\gamma(t))-\varphi_i(\sigma(t))|\wedge 1\big).
\end{equation*}
By the dominated convergence theorem,
\begin{equation*}
\lim_{k \rightarrow \infty}\mu\Big(\Big|\sup_{1\le j \le k}G_{\sigma,R}^j-G_{\sigma,R}\Big|^2\Big)=0.
\end{equation*}
Since $(\E_{\A}, \D(\E_{\A}))$ is closed, by (\ref{e9a}) and according to the same argument
in the proof of Lemma \ref{l2.1}, (see also\cite[Proposition 3.1]{RS}),
 we have $G_{\sigma,R} \in \D(\E_{\A})$ and
\begin{equation}\label{e10}
\E_{\A}(G_{\sigma,R},G_{\sigma,R}) \le 2c_1(R)(1+c_2(R)), \ \ \  R\ge 1, \sigma \in C_o(M).
\end{equation}

(d) Let $\{\sigma_i\}_{i=1}^{\infty}$ be a countable dense subset of $C_o(M)$.
For any $m,R\ge 1$, let
\begin{equation*}
G_{m,R}(\gamma):=\displaystyle\inf_{1\leq i\leq m}G_{\sigma_i,R}(\gamma),\ \ \  \gamma \in C_o(M).
\end{equation*}
Similar to the above argument, we obtain from (\ref{e10}) that
$G_{m,R} \in \D(\E_{\A})$ and for every $m,R\ge 1$,
\begin{equation}\label{e10a}
\E_{\A}(G_{m,R},G_{m,R}) \le 2c_1(R)(1+c_2(R)).
\end{equation}
Since $\{\sigma_i\}_{i=1}^{\infty}\subset C_o(M)$ is dense and $\bar d$ induces the same topology as
that induced by $d$, by dominated convergence theorem we obtain for every $R\ge 1$,
\begin{equation}\label{e10a1}
\lim_{m \rightarrow \infty}\mu \big(|G_{m,R}|^2\big)=0.
\end{equation}
For a fixed $R \ge 1$, due to (\ref{e10a}), (\ref{e10a1}),
repeating the same argument in the proof of Lemma \ref{l2.1} or \cite[Proposition 3.1]{RS}
(by the Banach-Saks property),
 there exists a subsequence $\{m_i^R\}_{i=1}^{\infty}$
such that
\begin{equation}\label{e10a2}
\lim_{j \rightarrow \infty}\E_{\A}(\bar G_{j,R}, \bar G_{j,R})=0,
\end{equation}
where
$\bar G_{j,R}:=\frac{1}{j}\sum_{i=1}^j G_{m_i^R,R}$. We can also find a subsequence
$\{j^R\}_{j=1}^{\infty}$ $\subseteq \{j\}_{j=1}^{\infty}$, such that for every $j$,
\begin{equation}\label{e11}
\begin{split}
&\E_{\A,1}\big(\bar G_{(j+1)^R,R}-\bar G_{j^R,R},\bar G_{(j+1)^R,R}-\bar G_{j^R,R}\big)\\
&:=\E_{\A}\big(\bar G_{(j+1)^R,R}-\bar G_{j^R,R},\bar G_{(j+1)^R,R}-\bar G_{j^R,R}\big)+
\|\bar G_{(j+1)^R,R}-\bar G_{j^R,R}\|_{L^2(\mu)}^2 \le 2^{-5(j+R)}.
\end{split}
\end{equation}
Since (\ref{e10a}) holds for each $m=m_{j^R}^R$, as the same way above, we can find subsequence
$\{m_{j^R}^{R}\}_{j=1}^{\infty}$ and
$\{j^{R+1}\}_{j=1}^{\infty}$, such that $\{m_i^{R+1}\}_{i=1}^{\infty}$ is a subsequence of
$\{m_{j^R}^{R}\}_{j=1}^{\infty} $,
and for every $j$,
\begin{equation*}
\E_{\A,1}\big(\bar G_{(j+1)^{R+1},R+1}-\bar G_{j^{R+1},R+1},\bar G_{(j+1)^{R+1}+1,R+1}-\bar G_{j^{R+1},R+1}\big)
\le 2^{-5(j+R+1)},
\end{equation*}
where $\bar G_{j,R+1}:=\frac{1}{j}\sum_{i=1}^j G_{m_i^{R+1},R+1}$ for every positive integer $j$.
Then by induction, for every $R\ge 1$, we can construct subsequence
$\{m_{i}^{R}\}_{i=1}^{\infty} $ and $\{j^R\}_{j=1}^{\infty}$, such that
$\{m_i^{R+1}\}_{i=1}^{\infty}$  is a subsequence of $\{m_{j^R}^{R}\}_{j=1}^{\infty} $ and (\ref{e11})
is true for every $j,R$.

%By selecting the diagonal subsequence (of $\{n\}_{n=1}^{\infty}$) if necessary, there exists a subsequence $\bar G_{n_R,R}$ of $\bar G_{n,R}$ such that
%\begin{equation*}
%\lim_{R \rightarrow \infty}\E_{\A}(\bar G_{n_R,R}, \bar G_{n_R,R})=0.
%\end{equation*}
%Combing this with (\ref{ccc}), and the same arguments as in \cite{RS}, proof of Proposition 3.1(see also proof of \cite[Proposition 3.3]{L} or \cite{WW1}) it follows that there exist a sequence of compact sets $C_R\subset W_o(M)$
%such that
%\begin{equation*}
%\lim_{R \rightarrow \infty}
%\Cap\big(C_o(M)\backslash C_R\big)=0.
%\end{equation*}
%By  selecting diagonal subsequence (of $\{j\}_{j=1}^{\infty}$) if necessary, we may assume for every $j,R \ge 1$,

Let $Y_{j,R}:=\{\gamma \in C_o(M): \ \bar G_{(j+1)^R,R}-\bar G_{j^R,R}>2^{-j}\}$, since
$\bar G_{j,R}$ is continuous by definition (note that we have shown $d$ and
$\bar d$ induce the same topology on $C_0(M)$), $Y_{j,R}$ is a open set, hence by (\ref{e11})
and the same argument for (2.1.10) in the proof of \cite[Theorem 2.1.3]{FOT}, we have,
\begin{equation}\label{e11a}
\Cap(Y_{j,R})\le \frac{\E_{\A,1}\big(\bar G_{(j+1)^R,R}-\bar G_{j^R,R},\bar G_{(j+1)^R,R}-\bar G_{j^R,R}
\big)}{2^{-2j}}\le 2^{-3j-5R}.
\end{equation}
For every $k\ge 1$, let
\begin{equation*}
Z_k:=\bigcup_{R=1}^{\infty}\bigcup_{j=k}^{\infty}Y_{j,R},\ \ \ \mathbf{K}_k:=C_o(M)\backslash Z_k,
\end{equation*}
so $\mathbf{K}_k$ is closed, and  for any $R \ge 1$,
\begin{equation}\label{e12}
\bar G_{j^R,R}(\gamma)\le 2^{-j+1},\ \  \gamma \in \mathbf{K}_k, \ j\ge k.
\end{equation}
Note that $G_{j+1,R}\le G_{j,R}$, so $G_{m_{j^R}^R,R} \le\bar G_{j^R,R}$, and
for any $R \ge 1$, (\ref{e12}) holds for $G_{m_{j^R}^R,R}$.
By construction above, $\{m_i^{R+1}\}_{i=1}^{\infty}$, hence $\{m_{j^{R+1}}^{R+1}\}_{j=1}^{\infty}$, is a subsequence
of $\{m_{j^R}^{R}\}_{j=1}^{\infty}$, by choosing the diagonal subsequence, we can find a
subsequence $\{q_j\}_{j=1}^{\infty}$, such that for every $R\ge 1$,
\begin{equation*}
G_{q_j,R}(\gamma)\le 2^{-j+1},\ \  \gamma \in \mathbf{K}_k, \ j\ge k.
\end{equation*}
By the definition of $G_{q_j,R}$, let $R \to \infty$,
note that $\phi_R \to 1$ as $R \to \infty$, we obtain,
\begin{equation}\label{e12a}
\inf_{1\le r \le q_j}\sup_{1\le i \le N}\sup_{t \in [0,1]}
\big|\varphi_i(\gamma(t))-\varphi_i(\sigma_r(t))\big|\le 2^{-j+1},\ \  \gamma \in \mathbf{K}_k, \ j\ge k.
\end{equation}
It is obvious that (\ref{e12a}) implies that $\mathbf{K}_k$ is totally bounded with respect to
the metric $\bar d$ defined by (\ref{e8aa}), also note that $\mathbf{K}_k$ is closed and
the topology  on $C_o(M)$ induced by $d$ and $\bar d$ is the same, we know
$\mathbf{K}_k$ is compact.

On the other hand, by (\ref{e11a}) and \cite[Lemma 2.3]{RS},
\begin{equation*}
\begin{split}
& \Cap\big(C_o(M)\backslash \mathbf{K}_k\big)=\Cap\big(\bigcup_{R=1}^{\infty}\bigcup_{j=k}^{\infty}Y_{j,R}\big)\\
&\le\sum_{R=1}^{\infty}\sum_{j=k}^{\infty}\Cap(Y_{j,R}) \le \sum_{R=1}^{\infty}\sum_{j=k}^{\infty}2^{-3j-5R}\le \frac{8\cdot 2^{-3k}}{217},
\end{split}
\end{equation*}
which implies that (\ref{e9}) is true. So by now we have completed the proof.
\end{proof}

\begin{Remark}
Here $\A$ can also be viewed as an (pointwise defined) operator from
$L^{\infty}(C_o(M)\rightarrow \H;\mu)$ to $L^{2}(C_o(M)\rightarrow \H;\mu)$ such that
\begin{equation}\label{e0}
\big(\A^{\frac{1}{2}}\big)\Phi(\gamma)=\A(\gamma)^{\frac{1}{2}}\Phi(\gamma),\ \mu-a.s. \  \gamma \in C_o(M),
\Phi \in \D(\A).
\end{equation}
In \cite{WW1}, with some restriction on the curvature of the based manifold, the
closability and quasi-regularity was shown for
$(\E_{\A}, \D(\E_{\A}))$ without the condition (\ref{e0}) on $\A$.
\end{Remark}

Repeating the proof of \cite[Proposition 5 (ii)]{DR} or \cite[Proposition 3.4]{L}, we can show
the the locality of $(\E_{\A}, \D(\E_{\A}))$,
\begin{prp}\label{p3.1} Suppose assumption (A1), (A2') and (A3) hold, the Dirichlet
form $(\E_{\A}, \D(\E_{\A}))$ is local.
\end{prp}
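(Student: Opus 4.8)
The plan is to exploit that $\E_\A$ is a carré-du-champ form, $\E_\A(u,v)=\int_{C_o(M)}\langle \A(\gamma)^{\frac12}Du,\A(\gamma)^{\frac12}Dv\rangle_{\H}\,\d\mu$, together with the \emph{locality of the gradient} $D$. Recall (\cite[Definition V.1.1]{MR}, in the sense used in \cite{DR} and \cite{L}) that $(\E_\A,\D(\E_\A))$ is local provided $\E_\A(u,v)=0$ for all $u,v\in\D(\E_\A)$ with disjoint supports. I would first establish this on the core $\F C_{b,loc}$ and then propagate it to $\D(\E_\A)$ by approximation.

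The key ingredient is that $D$ is a \emph{derivation}, and in particular local: $Dw(\gamma)=0$ for $\mu$-a.e. $\gamma$ lying in any open set on which $w$ is constant. On the cylinder core this is immediate from the explicit formula $(Dw(\gamma))_s=\sum_i(s\wedge t_i)U_{t_i}(\gamma)^{-1}\nabla_i f$, since if $w=f(\gamma(t_1),\dots,\gamma(t_m))$ is constant near $\gamma$ then $f$ is constant near $(\gamma(t_1),\dots,\gamma(t_m))\in M^m$, forcing all $\nabla_i f$ to vanish there; the property then extends to $\F C_{b,loc}\subseteq\D(\E)$ by the chain rule. Granting this, for $u,v\in\F C_{b,loc}$ with $\supp u\cap\supp v=\emptyset$ I fix $\gamma$ and argue pointwise: if $\gamma\notin\supp v$ then $v\equiv0$ near $\gamma$, so $Dv(\gamma)=0$; if $\gamma\in\supp v$ then $\gamma\notin\supp u$, so $u\equiv0$ near $\gamma$ and $Du(\gamma)=0$. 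In either case $\langle\A(\gamma)^{\frac12}Du(\gamma),\A(\gamma)^{\frac12}Dv(\gamma)\rangle_{\H}=0$, whence $\E_\A(u,v)=0$. This is exactly the derivation/chain-rule computation of \cite[Proposition 5 (ii)]{DR} and \cite[Proposition 3.4]{L}: writing $u=\phi(F)$, $v=\psi(F)$ with $F\in\F C_{b,loc}$ and $\phi,\psi\in C_0^\infty(\R)$ satisfying $\phi(0)=\psi(0)=0$ and $\supp\phi\cap\supp\psi=\emptyset$, one has $\E_\A(u,v)=\int\phi'(F)\psi'(F)|\A(\gamma)^{\frac12}DF|_{\H}^2\,\d\mu=0$ because $\phi'\psi'\equiv0$.

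To pass to general $u,v\in\D(\E_\A)$ with disjoint supports, I would approximate each in the $\E_{\A,1}$-norm by elements of $\F C_{b,loc}$, localizing with the cut-offs $\phi_R=l_R(\rho)$ of \eqref{e0aa} exactly as in the proofs of Lemma \ref{l2.1} and Theorem \ref{t1.2}: the local lower bound (A3) together with (A2') supplies the $\E_{\A,1}$-control over each shell $\mathbf B_R$, and the Banach--Saks/diagonal-subsequence device upgrades $L^2$-convergence to $\E_{\A,1}$-convergence. After separating the two supports by auxiliary cut-offs (as in \cite{DR}), the approximants can be taken with disjoint supports, so $\E_\A(u,v)=0$ extends to all such $u,v$, giving locality.

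The main obstacle is to make the locality of $D$ rigorous in the presence of the \emph{unbounded} operator $\A(\gamma)^{\frac12}$: one must check that the pointwise identities and $\mu$-a.e. statements survive composition with $\A(\gamma)^{\frac12}$ and remain stable under the approximation, using only the local ellipticity (A3) rather than a global curvature bound. As in the quasi-regularity proof of Theorem \ref{t1.2}, this is handled by localizing with $\phi_R$ and invoking the shell-wise estimates \eqref{e1a}--\eqref{e1aa} of (A2'), and it is the step that demands the most care.
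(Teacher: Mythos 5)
Your argument is correct and is essentially the proof the paper intends: the paper's own "proof" is a one-line citation to \cite[Proposition 5 (ii)]{DR} and \cite[Proposition 3.4]{L}, whose content is precisely your derivation/chain-rule computation for the carr\'e du champ $\langle\A^{1/2}Du,\A^{1/2}Dv\rangle_{\H}$ on the cylinder core, followed by localization with $\phi_R$ and an approximation in the $\E_{\A,1}$-norm. The only part you should write out carefully if asked for full detail is the final truncation step producing approximants with disjoint supports, but that is the same routine as in \cite{DR} and does not require any new idea beyond (A2') and (A3).
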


By \cite[Theorem IV 3.5]{MR} and \cite[Proposition V 1.11]{MR}, Theorem \ref{t1.2} and Proposition \ref{p3.1}, we get the following the result,

\begin{thm}\label{t3.2} Suppose assumption (A1), (A2') and (A3) hold,
then there exists a diffusion process $\mathbf{M}=\big(\Omega, \scr{F}, (\scr{F}_t)_{t \ge 0}, \xi_t,
(\mathbb{P}_z)_{z \in C_o(M)}\big)$ associated with $(\E_{\A}, \D(\E_{\A}))$, i.e.
$\mathbf{M}$ is a strong Markov process with continuous trajectories, and
for every $t>0$, $u \in L^2(\mu)$ bounded,
\begin{equation*}
T_t u(z)=\int u(\xi_t)\d \P_z,\ \ \mu-a.s.\ z \in C_o(M),
\end{equation*}
where $T_t$ denotes the $L^2(\mu)$ semigroup associated with $(\E_{\A}, \D(\E_{\A}))$.
\end{thm}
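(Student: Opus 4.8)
The plan is to treat this statement as a direct application of the general correspondence between quasi-regular local Dirichlet forms and diffusion processes developed in \cite{MR}, feeding in the two structural results already established: the quasi-regularity of $(\E_{\A}, \D(\E_{\A}))$ from Theorem \ref{t1.2} and its locality from Proposition \ref{p3.1}. Since the genuine analytic work has been carried out in proving those two facts, the present proof is essentially an assembly step, and I would keep it short.

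First I would record the state-space setup: $C_o(M)$ equipped with the uniform distance $d$ is a Polish space and $\mu$ is a probability measure on its Borel $\sigma$-algebra, so all the topological hypotheses required by the association theory of \cite{MR} are met. By Theorem \ref{t1.2}, the symmetric Dirichlet form $(\E_{\A}, \D(\E_{\A}))$ on $L^2(\mu)$ is quasi-regular. I would then invoke \cite[Theorem IV 3.5]{MR}, which yields an $m$-tight special standard (in particular strong Markov) process $\mathbf{M}=(\Omega, \scr F, (\scr F_t)_{t\ge 0}, \xi_t, (\P_z)_{z\in C_o(M)})$ properly associated with the form. Proper association means precisely that for each bounded $u \in L^2(\mu)$ and each $t>0$, the map $z \mapsto \int u(\xi_t)\,\d\P_z$ is a quasi-continuous $\mu$-version of $T_t u$; this is exactly the identity $T_t u(z)=\int u(\xi_t)\,\d\P_z$ for $\mu$-a.s. $z$ asserted in the theorem.

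Next I would upgrade the sample-path regularity using locality. By Proposition \ref{p3.1}, $(\E_{\A}, \D(\E_{\A}))$ is local, and \cite[Proposition V 1.11]{MR} states that a process properly associated with a quasi-regular \emph{local} Dirichlet form has $\P_z$-a.s.\ continuous trajectories (equivalently, is a diffusion up to its lifetime). Combining the continuity of paths with the strong Markov property inherited from the special standard structure gives exactly the diffusion process $\mathbf{M}$ claimed in the statement, together with the semigroup representation.

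I do not expect a genuine obstacle here: the two hard inputs are already in hand, and the remaining effort is verifying that the hypotheses of the two cited results from \cite{MR} apply verbatim in our infinite-dimensional setting. The only point that needs mild care is that the association in \cite[Theorem IV 3.5]{MR} holds only outside an $\E_{\A}$-exceptional set and that the semigroup identification is an $\mu$-almost-everywhere statement; one must therefore state the conclusion with the qualifier ``$\mu$-a.s.\ $z \in C_o(M)$'', as written, and regard the process as defined on an appropriate quasi-everywhere state space rather than literally on all of $C_o(M)$.
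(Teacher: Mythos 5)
Your proposal is correct and follows exactly the paper's route: the paper derives Theorem \ref{t3.2} by combining \cite[Theorem IV 3.5]{MR} and \cite[Proposition V 1.11]{MR} with the quasi-regularity from Theorem \ref{t1.2} and the locality from Proposition \ref{p3.1}, which is precisely your assembly. Your additional remark about the $\mu$-a.s.\ qualifier and the exceptional set is a sensible clarification but does not change the argument.
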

%Here we refer the reader to \cite[Proposition V 1.11]{MR}

%\begin{proof} According to \cite{MR}, it suffices to show that $\E_{\A}(F,G) =0$ if $F,G\in \D(\E_{\A})\cap L^\infty(\mu)$ with
%$\supp(F)\cap\supp(G)=\emptyset$.  Since
%$$\E_{\A}(F,G)=\int_{C_o(M)}\<\A(\gamma)^{\frac{1}{2}}DF,\A(\gamma)^{\frac{1}{2}}DG\>_{\H}d\mu,$$
%we now see it suffices to show that
%$$DF = 0\quad \mu-a.e.~\text{on}~ C_o(M)/\supp(F)$$
%By (A3), $\D(\E_{\A})\subset\D(\E)\subset\D(D).$ The following proof is similar to that of Theorem \cite[Proposition 3.4]{L}.
%\end{proof}

If $\A=\mathbf{I}$, $(\E_{\A}, \D(\E_{\A}))$ is the O-U Dirichlet form, and assumption
(A1), (A2'), (A3) hold, so we get the following corollary,

\begin{cor}
The O-U Dirichlet form $(\E, \D(\E))$ is quasi-regular.
\end{cor}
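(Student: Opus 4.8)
The plan is to obtain this as the special case $\A = \mathbf{I}$ of Theorem \ref{t1.2}, where $\mathbf{I}$ denotes the identity operator on $\H$. First I would verify that the constant operator $\A(\gamma) \equiv \mathbf{I}$ satisfies the three hypotheses (A1), (A2'), (A3). Condition (A1) is immediate, since the identity is everywhere defined, bounded, and self-adjoint, so $\D(\A(\gamma)) = \H$ for every $\gamma$; condition (A3) holds with $\vv(R) = 1$ because $\mathbf{I} \ge \mathbf{I}$.

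For (A2'), observe that $\A(\gamma)^{\frac{1}{2}} = \mathbf{I}$, so $\D(\A(\gamma)^{\frac{1}{2}}) = \H$ and the domain requirements are automatic; the two integral bounds then reduce to elementary estimates. For $\Psi_{t,v}(s,\gamma) = (s\wedge t)v(\gamma)$ one computes $\|\Psi_{t,v}(\cdot,\gamma)\|_{\H}^2 = t|v(\gamma)|^2 \le |v(\gamma)|^2$, which gives (\ref{e1a}); and by Lemma \ref{l2.1} together with $\sup_{r}|l_R'(r)| \le 2$ one gets $\|D\phi_R\|_{\H} \le 2$ pointwise, hence (\ref{e1aa}). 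Thus both bounds hold with, say, $c_1(R) = 4$, and (A2') is verified for $\A = \mathbf{I}$.

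The remaining, and only genuinely delicate, step is to identify the resulting Dirichlet form with the O-U Dirichlet form. Taking $\A = \mathbf{I}$ in (\ref{e2}) gives $\E_{\mathbf{I}}(F,G) = \int \langle DF, DG\rangle_{\H}\,\d\mu = \E(F,G)$, but $\E$ was defined as the closure of $(\E, \F C_b)$ while $\E_{\mathbf{I}}$ is the closure of $(\E_{\mathbf{I}}, \F C_{b,loc})$, so I must check that the two closures coincide. For this I would invoke the cut-off approximation from the proof of Theorem \ref{t1.1}: for $F \in \F C_b$ the local functions $F l_R(\rho) \in \F C_{b,loc}$ satisfy $F l_R(\rho) \to F$ in the graph norm as $R \to \infty$ (using $\|D\phi_R\|_{\H} \le 2$ and $\mu(\rho > R-1) \to 0$), so $\F C_b \subseteq \D(\E_{\mathbf{I}})$; conversely $\F C_{b,loc} \subseteq \D(\E)$ by Lemma \ref{l2.1}. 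Since both $\D(\E)$ and $\D(\E_{\mathbf{I}})$ are complete in the common graph norm, passing to closures yields $\D(\E) \subseteq \D(\E_{\mathbf{I}})$ and $\D(\E_{\mathbf{I}}) \subseteq \D(\E)$, whence $(\E_{\mathbf{I}}, \D(\E_{\mathbf{I}})) = (\E, \D(\E))$. Applying Theorem \ref{t1.2} with $\A = \mathbf{I}$ then gives the quasi-regularity of the O-U Dirichlet form.
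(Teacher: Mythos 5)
Your proposal is correct and follows the same route as the paper: the paper obtains the corollary precisely by checking (A1), (A2') and (A3) for $\A=\mathbf{I}$ and invoking Theorem \ref{t1.2}. The only difference is that you explicitly verify that the closure of $(\E_{\mathbf{I}},\F C_{b,loc})$ coincides with the closure of $(\E,\F C_b)$ via the cut-off approximation $Fl_R(\rho)\to F$ in graph norm, a point the paper asserts without comment; this is a worthwhile detail to make explicit, but it does not change the argument.
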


Moreover, it is easy to check for the damped Dirichlet form in Example \ref{ex1},
(A2') is true. Since (A1) and (A3) are verified in Example \ref{ex1},
the damped Dirichlet form is also quasi-regular.

We provide the following example such that $\A(\gamma)$ may be an unbounded operator, which can be viewed
a generalization of that in \cite{L} and \cite{WW1}.

\begin{exa}
\end{exa}

We first introduce an orthonormal basis $\{H_m\}_{m=1}^{\infty}$
of $\H$, which is constructed in \cite[Page 3]{L}. Let $S_1\equiv 1$ and
\begin{equation*}
S_{2^k+i}(s):=\begin{cases}
&2^{\frac{k}{2}},\ \ \ \ \ \ ~~\text{if}\ s \in [(i-1)2^{-k}, (2i-1)2^{-(k+1)}),\\
&-2^{\frac{k}{2}},\ \ \ \ \ \text{if}\ s \in [(2i-1)2^{-(k+1)}, i 2^{-k}),\\
&0,\ \ \ \ \ \ \ \ ~~\text{otherwise},
\end{cases}
\end{equation*}
for every integer $k\ge 0$ and $1\le i \le 2^k$. Let
\begin{equation*}
H_{n(p-1)+j}(t):=\int_0^t S_p(s)e_j ds,\ \ \ p,j \in \mathbb{N}_+~\text{with}\ \ 1\le j \le n,
\end{equation*}
where $\{e_j\}_{j=1}^n$ is an orthonormal basis of $\R^n$. Then $\{H_m\}_{m=1}^{\infty}$ is an orthonormal
basis of $\H$.

Let $\D(\A(\gamma))\subseteq \H$ be the domain of $\A(\gamma)$. We
suppose $\{H_m\}_{m=1}^{\infty}\subseteq \D(\A(\gamma))$ for $\mu$-$a.s.$ $\gamma \in C_o(M)$ and
for every $m \ge 1$,
$\A(\gamma)H_m=\lambda_m(\gamma)H_m$, where $\lambda_m(\cdot)\ge 0$
is a measurable function on $C_o(M)$. Hence
\begin{equation*}
\D(\A(\gamma)^{\frac{1}{2}})=\bigg\{h=\sum_{m=1}^{\infty}\langle h, H_m\rangle_{\H}H_m:\
\sum_{m=1}^{\infty}\lambda_m(\gamma)\big|\langle h, H_m\rangle_{\H}\big|^2<\infty \bigg\},
\end{equation*}
and
\begin{equation*}
\A(\gamma)^{\frac{1}{2}}h=\sum_{m=1}^{\infty}\lambda_m^{\frac{1}{2}}(\gamma)\langle h, H_m\rangle_{\H}H_m,\ \ \
\ h \in \D(\A(\gamma)^{\frac{1}{2}}),
\end{equation*}
by which we can check easily that assumption (A1) is true. We also assume that for every
$R \ge 1$,
\begin{equation}\label{e13aa}
\lambda_m(\gamma)\ge \vv(R),\ \ \ \gamma \in \mathbf{B}_R,\ \ m \ge 1,
\end{equation}
for some constant $\vv(R)>0$.
So assumption (A3) for $\A$ is true.

By the same computation in
\cite[Page 1089]{WW1}, we obtain for every $t \in [0,1]$,
$v \in L_{loc}^{\infty}(C_o(M)\to \H; \mu)$,
\begin{equation}\label{e13a}
\begin{split}
& \big\|\A(\gamma)^{\frac{1}{2}}\big((t \wedge \cdot)v(\gamma)\big)\big\|^2_{\H}\\
&\le\Big\{\sum_{j=1}^n \lambda_j(\gamma)+\sum_{k=0}^{\infty}\sum_{i=1}^{2^k}
\sum_{j=1}^n\lambda_{n(2^k+i-1)+j}(\gamma)2^{-k}
1_{\{((i-1)2^{-k},i2^{-k})\}}(t)\Big\}|v(\gamma)|^2,
\end{split}
\end{equation}
so if we assume for every $R\ge 1$,
\begin{equation}\label{e13}
\sum_{j=1}^n\mu\big(\lambda_j 1_{\mathbf{B}_R}\big)+\sum_{k=0}^{\infty}\sum_{i=1}^{2^k}
\sum_{j=1}^n\mu\big(\lambda_{n(2^k+i-1)+j}
1_{\mathbf{B}_R}\big)2^{-k}<\infty,
\end{equation}
then $(t \wedge \cdot)v(\gamma) \in \D(\A(\gamma)^{\frac{1}{2}})$ $\mu$-$a.s.$, and (\ref{e1a}) in (A2') holds.

Next we are going to check that (\ref{e1aa}) is true under (\ref{e13}).
Let $\phi_R^m(\gamma):=l_R(\rho^m(\gamma)),~\gamma\in C_o(M)$, where $l_R\in C_0^\infty(\mathbb{R})$
is constructed by (\ref{e0a}) and $\rho^m$ is defined by (\ref{c*}).
Since $\mu\Big(d_M(o,\gamma(t_i))=d_M(o,\gamma(t_j))\Big)=0$ for every $i \neq j$, we have for
each $s \in (0,1]$,
\begin{equation*}
\big(D\phi_R^m(\gamma)\big)(s)=\sum_{i=1}^m (t_i \wedge s)
v_i(\gamma)1_{\{\rho^m(\gamma)=
d_M(\gamma(t_i),o)\}},\ \mu-a.s. \gamma\in C_o(M),
\end{equation*}
where $v_i(\gamma):=l_R'(\rho^m(\gamma))U_{t_i}(\gamma)^{-1}\big(\nabla d_M(\gamma(t_i),o) \big)$. Note that
$v_i \in L_{loc}^{\infty}(C_o(M) \to \R^d; \mu)$,
by (\ref{e13a}) we obtain,
\begin{equation*}
\begin{split}
&\big\|\A(\gamma)^{\frac{1}{2}}(D\phi_R^m(\gamma))\big\|^2_{\H}=\sum_{i=1}^m
\big\|\A(\gamma)^{\frac{1}{2}}\big((t_i \wedge \cdot)v_i(\gamma)\big) \big\|_{\H}^2
1_{\{\rho^m(\gamma)=
d_M(\gamma(t_i),o)\}}\\
&\le 2\sum_{j=1}^n \lambda_j(\gamma)+2\sum_{k=0}^{\infty}\sum_{i=1}^{2^k}
\sum_{j=1}^n\lambda_{n(2^k+i-1)+j}(\gamma)2^{-k},\ \ \mu-a.s.,
\end{split}
\end{equation*}
where we use $\|v_i\|_{L^{\infty}}\le 2$ and for $\mu-a.s.$ $\gamma$, there is only
one $1\le i \le m$, such that $1_{\{\rho^m(\gamma)=
d_M(\gamma(t_i),o)\}} \neq 0$.   So according to (\ref{e13}),
\begin{equation*}
\sup_{m} \big\|\A(\gamma)^{\frac{1}{2}}(D\phi_R^m(\gamma))\big\|^2_{\H}
<\infty,\ \ \mu-a.s..
\end{equation*}
Since $\A(\gamma)^{\frac{1}{2}}$ is closed, based on this
and as the same argument in Lemma \ref{l2.1}, by the Banach-Saks property,
we have, $D \phi_R \in \D(\A(\gamma)^{\frac{1}{2}})$, and
\begin{equation*}
\begin{split}
& \big\|\A(\gamma)^{\frac{1}{2}}(D\phi_R(\gamma))\big\|^2_{\H}
\le 2\sum_{j=1}^n \lambda_j(\gamma)+2\sum_{k=0}^{\infty}\sum_{i=1}^{2^k}
\sum_{j=1}^n\lambda_{n(2^k+i-1)+j}(\gamma)2^{-k},\ \mu-a.s,
\end{split}
\end{equation*}
applying (\ref{e13}) to this, we get (\ref{e1aa}).

Coming all the analysis above, we know (A1), (A2') and (A3) are true if
(\ref{e13aa}) and (\ref{e13}) holds. Moreover, it is easy to see that if for every
$R\ge 1$, there are positive constants $c(R),\delta(R)$ such that,
\begin{equation*}
\mu\big(\lambda_m 1_{\{\rho \le R\}}\big)\le c(R)m^{1-\delta(R)},\ \ \ m\ge 1,
\end{equation*}
then (\ref{e13}) holds.

\section{Appendix}
As in the proof of Theorem \ref{t1.1}, for every $R\ge 1$, we take a
non-negative function $f_R \in C_0^{\infty}(M)$, such that
$f \big |_{B_R}=1$, and $M_R:=\{x \in M:\ f_R(x)>0\}$ is a connected set.
Then we define a metric $\langle, \rangle_R$ on $M_R$ as
\begin{equation*}
\langle, \rangle_R:=f_R^{-2} \langle, \rangle,
\end{equation*}
where $\langle, \rangle$ is the Riemannian metric on $M$.

Step by step following the argument in \cite[Section 2]{TW}, we
know $(M_R, \langle, \rangle_R)$ is a complete Riemannian manifold. But it was not written as a Lemma in
\cite[Section 2]{TW} for such conclusion, so for convenience of the reader, we include the following lemma
here.

\begin{lem}\label{l5.1}(\cite{TW})
$(M_R, \langle, \rangle_R)$ is a complete Riemannian manifold.
\end{lem}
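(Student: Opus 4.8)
The plan is to deduce completeness of $(M_R,\langle,\rangle_R)$ from the Hopf--Rinow theorem, by verifying that every closed $d_R$-bounded subset of $M_R$ is compact, where $d_R$ is the Riemannian distance induced by $\langle,\rangle_R$. The geometric mechanism I would exploit is that $f_R$ vanishes on the topological boundary $\partial M_R=\overline{M_R}\setminus M_R$, which is contained in the compact set $\supp f_R$; hence the conformal factor $f_R^{-2}$ blows up near $\partial M_R$ and should push the boundary to infinite $d_R$-distance. Making this precise is the whole content of the lemma.

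First I would record that for a smooth curve $\gamma$ in $M_R$ the $\langle,\rangle_R$-length is $L_R(\gamma)=\int f_R(\gamma(t))^{-1}|\gamma'(t)|\,\d t$, where $|\cdot|$ is the original norm, and that for any smooth $u$ one has $|\nabla_R u|_R=f_R|\nabla u|$ (raising the index with the inverse metric $f_R^2\langle,\rangle^{-1}$). I then fix a base point $o\in B_R\subseteq M_R$, so that $f_R(o)=1$. The key step is the estimate for $u:=-\log f_R$: since $\nabla u=-f_R^{-1}\nabla f_R$, we get $|\nabla_R u|_R=|\nabla f_R|\le C$, where $C:=\sup_M|\nabla f_R|<\infty$ because $f_R\in C_0^\infty(M)$. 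Integrating $\frac{\d}{\d t}u(\gamma(t))=\langle\nabla_R u,\gamma'\rangle_R$ along any curve from $o$ to $x$ and taking the infimum over such curves (using connectedness of $M_R$, so that $d_R(o,x)<\infty$) yields the Lipschitz bound $-\log f_R(x)\le C\,d_R(o,x)$, that is, $d_R(o,x)\ge C^{-1}\log\big(1/f_R(x)\big)$ for every $x\in M_R$.

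From this estimate the conclusion follows quickly. For any $\Lambda>0$ the closed ball $\{x\in M_R:\ d_R(o,x)\le\Lambda\}$ is contained in $\{f_R\ge \e^{-C\Lambda}\}$, which is a closed subset of the compact set $\supp f_R$, hence compact, and which lies inside $M_R$ since there $f_R\ge \e^{-C\Lambda}>0$; thus closed $d_R$-bounded sets are compact and Hopf--Rinow gives completeness. (Equivalently, a $d_R$-Cauchy sequence is $d_R$-bounded, hence stays in a fixed compact set $\{f_R\ge\vv\}\subseteq M_R$ on which $d_R$ and $d_M$ are comparable, so it converges to a point of $M_R$.) The hard part is precisely this conformal blow-up argument: verifying the identity $|\nabla_R u|_R=f_R|\nabla u|$ and using the compactness of $\supp f_R$ to bound $|\nabla f_R|$ globally, which together produce the decisive inequality $d_R(o,x)\ge C^{-1}\log(1/f_R(x))$. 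Once that inequality is in hand, the compactness of closed balls and hence completeness are routine, and the only hypotheses genuinely used beyond the construction of $f_R$ are the boundedness of $|\nabla f_R|$ and the connectedness of $M_R$.
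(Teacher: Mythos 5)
Your proof is correct, and it takes a genuinely different (and more streamlined) route than the paper's. The paper argues by contradiction on a $d_R$-Cauchy sequence: either the sequence stays in some sublevel set $O_{m_0}=\{f_R>1/m_0\}$, in which case relative compactness and the local comparison $d_R\le (m_0+2)\,d_M$ produce a limit in $M_R$, or it escapes through the level sets, in which case the mean value theorem applied to $f_R$ gives $d_M\big(O_{m_{i+1}},O_{m_{i+2}}^c\big)\ge C_1^{-1}\big(\tfrac{1}{m_{i+1}}-\tfrac{1}{m_{i+2}}\big)$ and hence $d_R(x_i,x_{i+2})\ge \tfrac{1}{2C_1}$ along a suitable subsequence, contradicting the Cauchy property. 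You instead package the same mechanism --- the global bound $\sup_M|\nabla f_R|<\infty$ combined with the conformal blow-up of $f_R^{-2}$ near $\{f_R=0\}$ --- into the single differential inequality $|\nabla_R(-\log f_R)|_R=|\nabla f_R|\le C$, which integrates to the clean quantitative estimate $f_R(x)\ge \e^{-C\,d_R(o,x)}$. This immediately traps every $d_R$-ball inside a compact set $\{f_R\ge\vv\}\subset M_R$ and yields completeness via the Heine--Borel property (indeed, one does not even need the full Hopf--Rinow theorem for this direction: a Cauchy sequence is bounded, hence lies in a compact set, hence converges). Your approach buys an explicit lower bound on $d_R(o,\cdot)$ in terms of $f_R$ and avoids the subsequence extraction and case analysis; the paper's argument is more elementary in that it never computes the conformal gradient identity, working only with lengths of curves and the mean value theorem. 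Both proofs use exactly the same hypotheses: $f_R\in C_0^\infty(M)$ (so $|\nabla f_R|$ is bounded and $\supp f_R$ is compact) and connectedness of $M_R$.
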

\begin{proof}
Let $d_R$ be the Riemannian distance on $(M_R, \langle, \rangle_R)$, to prove the conclusion, it suffices to
show that for any $d_R$-Cauchy sequence $\{x_i\}_{i=1}^{\infty}\subseteq M_R$, there is a limit point
$x_0 \in M_R$, i.e. $\lim_{i \rightarrow \infty}d_R(x_i,x_0)=0$.

For any $m\ge 1$, let $O_m:=\{x \in M: f_R(x)>\frac{1}{m}\ \}$. If there exists a $m_0>0$, such that
$\{x_i\}_{i=1}^{\infty}\subseteq O_{m_0}$, since $O_{m_0}$ is relatively compact and $(M,\langle,\rangle)$
is complete, there exist a $x_0 \in O_{m_0+1}$ and a subsequence $\{x_{i_k}\}_{k=1}^{\infty}$, such that
$\lim_{k \rightarrow \infty}d_M(x_{i_k},x_0)=0$, where $d_M$
denotes the Riemannian distance on $(M,\langle,\rangle)$.
By the continuity of $f_R$,
when $k$ is big enough, for the minimal geodesic $\gamma_k(\cdot)$ on $(M,\langle,\rangle)$
connecting $x_{i_k}$ and $x_0$, we have $f_R(\gamma_k(t))\ge \frac{1}{m_0+2}$ for all $t$,
so by definition $d_R(x_{i_k},x_0)\le (m_0+2) d_M(x_{i_k},x_0)$, hence
$\lim_{k \rightarrow \infty}d_R(x_{i_k},x_0)=0$. Note that a Cauchy sequence is convergent if a
subsequence is convergent, so we obtain $\lim_{i \rightarrow \infty}d_R(x_i,x_0)=0$.

If $\{x_i\}_{i=1}^{\infty}\nsubseteq O_{m}$ for any $m \ge 1$, by selecting a subsequence
if necessary, we can
assume that $\lim_{i \rightarrow \infty}d_M(x_i,x_0)=0$, for some $x_0 \in M$ satisfying
$f_R(x_0)=0$ and there exists a subsequence $\{m_i\}_{i=1}^{\infty}$ of $\{m\}_{m=1}^{\infty}$, such that,
$x_i \notin O_{m_i}$, $x_i \in O_{m_{i+1}}$, $m_{i+2}>2m_{i+1}$, $\lim_{i \rightarrow \infty}m_i=\infty$.

On the other hand, suppose $\gamma \in C([0,1];M)$ is a $C^1$ path such that
$\gamma(0)=x_i$ and $\gamma(1)=x_{i+2}$, let
\begin{equation*}
%\tau:=\inf\{t \in [0,1]: \ \gamma(t) \in O_{m_i}\},\ \
\tau:=\sup\{t \in [0,1]: \ \gamma(t) \in O_{m_{i+1}}\},
\end{equation*}
so $0<\tau<1$ since $x_i \in O_{m_{i+1}}$, $x_{i+2} \notin O_{m_{i+1}}$, and
$f_R(\gamma(t))\le \frac{1}{m_{i+1}}$ for every $t \in  [\tau,1]$.
Hence we have,
\begin{equation*}
\begin{split}
& \int_0^1 |\gamma'(t)|_R \d t\ge \int_{\tau}^1 |\gamma'(t)|_R \d t
\ge m_{i+1}\int_{ \tau}^1 |\gamma'(t)| \d t\ge m_{i+1} d_M(O_{m_{i+1}},O^c_{m_{i+2}}),
\end{split}
\end{equation*}
note that $\gamma$ is arbitrary, we obtain,
\begin{equation*}
d_R(x_i,x_{i+2})\ge m_{i+1} d_M(O_{m_{i+1}},O^c_{m_{i+2}}).
\end{equation*}
Since $|f_R(x)-f_R(y)|\ge \frac{1}{m_{i+1}}-\frac{1}{m_{i+2}}$ for all $ x \in O_{m_{i+1}}$,
$y \in O^c_{m_{i+2}}$, by mean value theorem,
\begin{equation*}
\begin{split}
&\frac{1}{m_{i+1}}-\frac{1}{m_{i+2}}\le |f_R(x)-f_R(y)|\le C_1d_M(x,y),
\ \ \ x \in O_{m_{i+1}},\ y \in O^c_{m_{i+2}},
\end{split}
\end{equation*}
where $C_1:=\sup_{x \in M}|\nabla f(x)|>0$, which implies that
$d_M(O_{m_{i+1}},O^c_{m_{i+2}})\ge $ $\frac{1}{C_1} \big(\frac{1}{m_{i+1}}-\frac{1}{m_{i+2}}\big)$.
So we have,
\begin{equation}\label{e20a}
\begin{split}
&d_R(x_i,x_{i+2})\ge m_{i+1}d_M(O_{m_{i+1}},O^c_{m_{i+2}})\\
&\ge \frac{m_{i+1}}{C_1}\big(\frac{1}{m_{i+1}}-\frac{1}{m_{i+2}}\big)\ge \frac{1}{2C_1},
\end{split}
\end{equation}
where we use $m_{i+2}>2m_{i+1}$ in the last step. It is obvious that
(\ref{e20a}) contradicts with the assumption $\{x_i\}_{i=1}^{\infty}$ is a $d_R$-Cauchy sequence, so
we must have $\{x_i\}_{i=1}^{\infty}\subseteq O_{m_0}$ for some $m_0 \ge 1$, and as the analysis above,
there exists a limit point $x_0 \in M_R$ for $\{x_i\}_{i=1}^{\infty}$.
\end{proof}

\section*{Acknowledgments}

We would like to thank Professor Feng-Yu Wang for useful conversations. This research is  supported in part by
the FCT(PTDC/MAT/104173/2008) and Specialized Research Fund for the Doctoral Program of Higher Education (No. 20120071120001) of China

\beg{thebibliography}{99}

\leftskip=-2mm
\parskip=-1mm

\bibitem{A} S. Aida, \emph{Logarithmic derivatives of heat kernels and logarithmic Sobolev inequalities
with unbounded diffusion coefficients on loop spaces,} J. Funct.
Anal. 174(2000), 430--477.

\bibitem{AE} S. Aida and K. D. Elworthy, \emph{Differential calculus on path and loop spaces. I. Logarithmic
Sobolev inequalities on path spaces,} C. R. Acad. Sci. Paris S\'erie I, 321(1995), 97--102.

\bibitem{BLW}  D. Bakry, M. Ledoux, F.-Y. Wang, \emph{Perturbations of functional inequalities using growth conditions,}
J. Math. Pures Appl. 87(2007), 394--407.

%\bibitem{Croke} C. B. Croke, \emph{Some isoperimetric inequalities and
%eigenvalue estimates,}  Ann. Sci. \'{E}cole. Sup. 13(1980), 419-435.

%\bibitem{BCR} F. Barthe, P. Cattiaux, C. Roberto, \emph{Isoperimetry between exponential and Gaussian,}
%Electron. J. Probab. 12 (2007), 1212--1237.

\bibitem{CHL} B. Capitaine,  E. P. Hsu and M. Ledoux, \emph{Martingale representation and a simple proof of logarithmic
Sobolev inequalities on path spaces,} Electron. Comm. Probab. 2(1997),
71--81.

\bibitem{CGG} P. Cattiaux, I. Gentil and A. Guillin,
\emph{Weak logarithmic Sobolev inequalities and entropic convergence,}
Probab. Theory. Related. Fields. 139 (2007), 563--603.

\bibitem{CLW} X. Chen, X.-M. Li and B. Wu, \emph{A Poincar\'e inequality on loop spaces,}
J. Funct. Anal. 259 (2010), 1421--1442.

\bibitem{CM} A. B. Cruzeiro and P. Malliavin, \emph{Renormalized differential geometry
and path space: structural equation, curvature,}  J. Funct. Anal. 139: 1 (1996), 119--181.

\bibitem{D1} B. K. Driver, \emph{A Cameron-Martin type quasi-invariance theorem for Brownian
motion on a compact Riemannian manifolds,} J. Funct. Anal.
110(1992), 273--376.

%\bibitem{D2} B. K. Driver, \emph{A Cameron-Martin type quasi-invariance theorem for pinned Brownian
%motion on a compact Riemannian manifolds,} Trans. Amer. Math. Soc.
%342(1994), 375-3395.

\bibitem{DR} B. K. Driver and M. R\"{o}ckner, \emph{Construction of diffusions on path and
loop spaces of compact Riemannian manifolds,} C. R. Acad. Sci. Paris
S\'eries I 315(1992), 603--608.

\bibitem{EL} K. D. Elworthy, X.- M. Li and Y. Lejan, \emph{On The geometry of diffusion
operators and stochastic flows,} Lecture Notes in Mathematics, 1720(1999), Springer-Verlag.

\bibitem{EM} K. D. Elworthy and Z.-M. Ma, \emph{Vector fields on mapping spaces and
related Dirichlet forms and diffusions,} Osaka. J. Math. 34(1997), 629--651.

\bibitem{ES} O. Enchev and D. W. Stroock, \emph{Towards a Riemannian geometry on the path space over a Riemannian manifold,}
 J. Funct. Anal. 134 : 2 (1995), 392--416.

\bibitem{F} S.- Z. Fang, \emph{Un in\'equalit\'e du type Poincar\'esur un espace de chemins,}
C. R. Acad. Sci. Paris S\'erie I 318(1994), 257--260.

\bibitem{FM} S.- Z. Fang and P. Malliavin, \emph{Stochastic analysis on the path
space of a Riemannian manifold: I. Markovian stochastic calculus,} J. Funct. Anal. 118 : 1 (1993), 249--274.

\bibitem{FWW} S.- Z. Fang, F.-Y. Wang and B. Wu, \emph{Transportation-cost
inequality on path spaces with uniform distance,} Stochastic. Process. Appl. 118: 12 (2008), 2181--2197.

\bibitem{FOT} M. Fukushima,  Y. Oshima and M. Takeda,
\emph{Dirichlet Forms and Symmetric Markov Processes,} Walter de
Gruyter, 2010.

\bibitem{Hsu1} E. P. Hsu, \emph{Quasi-invariance of the Wiener measure on the path space over a compact
Riemannian manifold,} J. Funct. Anal. 134(1995), 417--450.

\bibitem{Hsu4} E. P. Hsu, \emph{Logarithmic Sobolev inequalites on path spaces over compact Riemannian manifolds,}
Commun. Math. Phys. 189(1997), 9?16.

\bibitem{Hsu3} E. P. Hsu, \emph{Analysis on path and loop spaces,} in "Probability Theory and Applications"
(E. P. Hsu and S. R. S. Varadhan, Eds.), LAS/PARK CITY Mathematics Series, 6(1999), 279--347,
Amer. Math. Soc. Providence.

\bibitem{Hsu2} E. P. Hsu, \emph{Stochastic Analysis on Manifold,} American Mathematical Society, 2002.

\bibitem{HO} E. P. Hsu and C. Ouyang, \emph{Cameron-Martin theorem for complete Riemannian manifolds,}
J. Funct. Anal. 257: 5 (2009), 1379--1395.

%\bibitem{K} W. S. Kendall, \emph{The radial part of Brownian motion on a manifold:
%a semimartingale property,}  Ann. Probab. 15(1997), 1491-1500.

\bibitem{L} J.-U. L\"{o}bus, \emph{A class of processes on the path space
over a compact Riemannian manifold with unbounded diffusion,} Tran.
Ame. Math. Soc. (2004), 1--17.

\bibitem{MR} Z.- M. Ma and M. R\"{o}ckner, \emph{Introduction to the
theory of (non-symmetric) Dirichlet forms,} Berlin: Springer, 1992.

\bibitem{RS} M. R\"{o}ckner and B. Schmuland, \emph{Tightness of general $C_{1,p}$ capacities on Banach
space,} J. Funct. Anal. 108(1992), 1--12.

\bibitem{TW} A. Thalmaier and F.-Y. Wang, \emph{Gradient estimates for
 harmonic functions on regular domains in Riemannian manifolds,} J. Funct. Anal. 155:1(1998),109--124.

\bibitem{W00a}
F.-Y. Wang, \emph{Functional inequalities for empty essential spectrum,}
 J.\ Funct.\ Anal. 170(2000), 219--245.

\bibitem{W} F.- Y. Wang, \emph{Weak poincar\'{e} Inequalities on path
spaces,} Int. Math. Res. Not. 2004(2004), 90--108.

\bibitem{Wbook}
F.-Y. Wang,   \emph{Functional Inequalities, Markov Processes and
Spectral Theory,} Science Press, Beijing, (2005).

\bibitem{WW1} F.- Y. Wang and B. Wu, \emph{Quasi-Regular Dirichlet Forms on
Riemannian Path and Loop Spaces,} Forum Math. 20(2008), 1085--1096.

\bibitem{WW2} F. -Y. Wang and B. Wu, \emph{Quasi-Regular Dirichlet Forms on Free Riemannian Path and Loop Spaces,} Inf. Dimen. Anal. Quantum Probab.
and Rel. Topics 2(2009) 251--267.

\end{thebibliography}
\end{document}